\definecolor{darkgreen}{rgb}{0,0.45,0}
\newenvironment{tikzcenter}{\begin{center}\begin{tikzpicture}}{\end{tikzpicture}\end{center}}
\let\ea\expandafter
\def\mdef#1#2{\ea\ea\ea\gdef\ea\ea\noexpand#1\ea{\ea\ensuremath\ea{#2}\xspace}}
\def\alwaysmath#1{\ea\ea\ea\global\ea\ea\ea\let\ea\ea\csname your@#1\endcsname\csname #1\endcsname
  \ea\def\csname #1\endcsname{\ensuremath{\csname your@#1\endcsname}\xspace}}
\mdef\sC{\mathscr{C}}
\mdef\sM{\mathscr{M}}
\mdef\cC{\mathcal{C}}
\mdef\cI{\mathcal{I}}
\mdef\fU{\mathfrak{U}}
\mdef\Hbar{\overline{H}}
\mdef\Util{\widetilde{U}}
\mdef\albar{\overline{\alpha}}
\let\al\alpha
\let\be\beta
\let\gm\gamma
\let\ka\kappa
\let\la\lambda
\mdef\bSet{\mathbf{Set}}
\mdef\bCat{\mathbf{Cat}}
\mdef\sSet{\mathbf{sSet}}
\mdef\nEq{\mathrm{Eq}}
\mdef\iscontr{\mathsf{isContr}}
\mdef\isprop{\mathsf{isProp}}
\mdef\isequiv{\mathsf{isEquiv}}
\mdef\equiv{\mathsf{Equiv}}
\mdef\fun{\mathsf{fun}}
\mdef\io{(\infty,1)}
\mdef\ig{\infty\mathsf{Gpd}}
\let\dn\downarrow
\newcommand{\op}{^{\mathrm{op}}}
\newcommand{\pushoutcorner}[1][dr]{\save*!/#1+1.4pc/#1:(1,-1)@^{|-}\restore}
\newcommand{\pullbackcorner}[1][dr]{\save*!/#1-1.2pc/#1:(-1,1)@^{|-}\restore}
\DeclareMathOperator\colim{colim}
\newcommand{\too}[1][]{\ensuremath{\overset{#1}{\longrightarrow}}}
\let\toto\rightrightarrows
\let\into\hookrightarrow
\let\cof\rightarrowtail
\def\acof{\mathrel{\mathrlap{\hspace{3pt}\raisebox{4pt}{$\scriptscriptstyle\sim$}}\mathord{\rightarrowtail}}}
\let\fib\twoheadrightarrow
\def\afib{\mathrel{\mathrlap{\hspace{3pt}\raisebox{4pt}{$\scriptscriptstyle\sim$}}\mathord{\twoheadrightarrow}}}
\let\xto\xrightarrow
\def\twoheadrightarrowfill@{\arrowfill@{\relbar\joinrel\relbar}\relbar\twoheadrightarrow}
\newcommand\xtwoheadrightarrow[2][]{\ext@arrow 0055{\twoheadrightarrowfill@}{#1}{#2}}
\let\xfib\xtwoheadrightarrow
\def\toiso{\xto{\smash{\raisebox{-.5mm}{$\scriptstyle\sim$}}}}
\newif\ifhyperref
    \def\defthm#1#2{%
      %% All types of theorems are number inside sections
      \newtheorem{#1}{#2}[section]%
      %% This command tells hyperref's \autoref what to call things
      \expandafter\def\csname #1autorefname\endcsname{#2}%
      %% This makes all the theorem counters actually the same counter
      \expandafter\let\csname c@#1\endcsname\c@thm}
    \def\defthm#1#2{\newtheorem{#1}[thm]{#2}}
\let\SK@label\label\fi
    \let\old@label\label
    \let\your@thm\@thm
    \def\@thm#1#2#3{\gdef\currthmtype{#3}\your@thm{#1}{#2}{#3}}
    \def\currthmtype{}
    \def\label#1{{\let\your@currentlabel\@currentlabel\def\@currentlabel%
        {\currthmtype~\your@currentlabel}%
        \SK@label{#1@}}\old@label{#1}}
    \def\autoref#1{\ref{#1@}}
\theoremstyle{plain}
\newtheorem{thm}{Theorem}[section]
\theoremstyle{remark}
\numberwithin{equation}{section}
\theoremstyle{plain}
\newtheorem*{rep@theorem}{\rep@title}
\newcommand{\newreptheorem}[2]{%
\newenvironment{rep#1}[1]{%
 \def\rep@title{Restatement of #2 \ref{##1}}%
 \begin{rep@theorem}}%
 {\end{rep@theorem}}}
\begin{document}

\title{The univalence axiom for elegant Reedy presheaves}
\author{Michael Shulman}
\email{shulman@sandiego.edu}
\address{Department of Mathematics and Computer Science,
  University of San Diego,
  5998 Alcal\'a Park,
  San Diego, CA, 92110,
  USA}
\thanks{This material is based upon work supported by the National Science Foundation under a postdoctoral fellowship and agreement No. DMS-1128155.  Any opinions, findings, and conclusions or recommendations expressed in this material are those of the author and do not necessarily reflect the views of the National Science Foundation.}

% \classification{55U35,18G55,03G30}

\keywords{homotopy type theory, univalence axiom, elegant Reedy category}

\begin{abstract}
  We show that Voevodsky's univalence axiom for homotopy type theory is valid in categories of simplicial presheaves on elegant Reedy categories.
  In addition to diagrams on inverse categories, as considered in previous work of the author, this includes bisimplicial sets and $\Theta_n$-spaces.
  This has potential applications to the study of homotopical models for higher categories.
\end{abstract}

% Leave these items like this, and the journal will fill them in.
% \received{Month Day, Year}   % receive date (for example: October 11, 1999)
% \revised{Month Day, Year}    % date of revision; omit, if no revision;
%                              % if multiple revisions, separate by commas
% \published{Month Day, Year}  % publish date
% \submitted{Journal Editor}      % Name of Journal's Editor, who handled Article 
% \volumeyear{2014} % Volume Year
% \volumenumber{1} % Volume Number 
% \issuenumber{1}   % Issue Number
% \startpage{1}     % PageNumber of first page
% \articlenumber{1} % Sequence number of article within issue
% If copyright is retained by author, comment this out:
%\owner{International Press}

\maketitle

\section{Introduction}
\label{sec:introduction}

\emph{Type theory} is a formal syntax for mathematical reasoning, with roots in constructive logic and computer science.
Its \emph{types} are traditionally viewed as set-like, but can more generally be interpreted as objects of any sufficiently structured category~\cite{seely:lccc-tt,cd:lccc-tt}; thus formal derivations in type theory yield theorems in category theory.
Recently it has emerged (see e.g.~\cite{hs:gpd-typethy,warren:thesis,aw:htpy-idtype,ak:htmtt,gb:topsimpid,voevodsky:typesystems,lw:localuniv,awodey:natmodels}) that this correspondence can be extended to certain \emph{model} categories, so that formal derivations in type theory can also yield theorems in homotopy theory.
The resulting subject is known as \emph{homotopy type theory}.

The collection of homotopical theorems that have been proven using type theory is small but growing; see~\cite[Chapter 8]{hottbook} for a list as of its publication.
So far, all such theorems were already known by methods of classical homotopy theory, but there are several advantages to the type-theoretic approach.
One significant one is that type theory can be interpreted in \emph{many} model categories, so that a type-theoretic proof of a theorem such as $\pi_1(S^1)=\mathbb{Z}$~\cite{ls:pi1s1} is much more general than a classical proof using topological spaces or simplicial sets.
For example, Lumsdaine, Finster, and Licata have already used type theory to produce a new proof of the Blakers--Massey theorem~\cite{favonia:blakers-massey-agda}, which thus applies in more general categories.
When translated across the categorical interpretation of type theory~\cite{rezk:hott-blakersmassey}, this proof becomes a model-categorical one, which could in principle have been discovered by classical homotopy theorists; but in practice it was not.

Type theory also brings a more ``internal'' perspective to homotopy theory, yielding more general constructions of various objects.
For instance, in classical homotopy theory one may consider the space $\mathrm{hAut}(X)$ of self-homotopy equivalences of a space $X$, defined simply as the obvious subspace of the function space $X^X$.
Homotopy type theory shows that an equivalent space to $\mathrm{hAut}(X)$ can be obtained by purely categorical constructions, which is thereby applicable in more generality.
(We will describe this example in more detail in \autoref{sec:equiv}.)

Finally, another advantage of type-theoretic homotopy theory is its convenient treatment of fibrations.
In type theory, a fibration over a space $A$ is represented by a \emph{type family} indexed by $A$, which is a map from $A$ into the \emph{universe type} $U$.
The correctness of this representation is guaranteed by Voevodsky's \emph{univalence axiom}, which identifies the path space of $U$ with a certain space of equivalences, implying that $U$ is a ``classifying space for fibrations''.
Of course, classifying spaces also exist in classical homotopy theory, but the systematic representation of fibrations as functions in type theory simplifies many things, especially when working with spaces defined as colimits (for example, the ``encode-decode method'' described in~\cite[Chapter 8]{hottbook}), or when doing parametrized homotopy theory.

With these two advantages of type theory in mind, it becomes important to know in \emph{which} model categories we can interpret type theory, and in particular the univalence axiom.
Since univalence says that the universe is a classifying space for fibrations, one natural categorical analogue would be the \emph{object classifiers} of Rezk and Lurie (see~\cite[\S6.1.6]{lurie:higher-topoi} and also~\cite{gk:univlcc}).
Thus, we may expect that type theory with the univalence axiom could be interpreted in any model category with object classifiers, and in particular in any presentation of an ``\io-topos''.

The main problem with this, and with the interpretation of type theory more generally, is that its formal syntax is stricter than the categorical structure available in the desired models.
The first model of the univalence axiom to overcome this difficulty was also due to Voevodsky~\cite{klv:ssetmodel}, using the model category \sSet of simplicial sets, which presents the \io-category of $\infty$-groupoids (the most basic \io-topos).

In~\cite{shulman:invdia}, starting from Voevodsky's model in \sSet, I constructed a model of type theory satisfying univalence in the \emph{Reedy model category} $\sSet^I$, whenever $I$ is an \emph{inverse category}.
This paper will generalize that result to the Reedy model structure on $\sSet^{\cC\op}$ whenever $\cC$ is an \emph{elegant Reedy category}, as in~\cite{br:reedy}.
(This result has now been further generalized by Cisinski~\cite{cisinski:elegant}.)
Elegant Reedy categories include direct categories (the opposites of inverse categories), but also categories such as the simplex category $\Delta$, the $n$-fold simplex category $\Delta^n$, and Joyal's categories $\Theta_n$.
Thus, the \io-toposes of $n$-fold simplicial spaces and $\Theta_n$-spaces admit models of type theory satisfying the univalence axiom.
We will not study such particular models further in this paper, but since these toposes have been used as models for higher categories (see e.g.~\cite{rezk:css,rezk:cpncats}), their internal type theories may be useful in extending the interpretation of type theory from \io-categories to $(\infty,n)$-categories.

The proof given in this paper does not depend on that of~\cite{shulman:invdia}, and is more similar in flavor to that of~\cite{klv:ssetmodel}.
In particular, it is purely model-category-theoretic; no knowledge of type theory is required.
(Note that the model-categorical parts of the construction of~\cite{klv:ssetmodel} are reproduced in the shorter paper~\cite{klv:univalence}.)
However, this new proof does not replace~\cite{shulman:invdia}, since it applies only to presheaves of simplicial sets, whereas that of~\cite{shulman:invdia} applies to diagrams in any category which models type theory with univalence (such as the syntactic category of type theory itself) and also generalizes to oplax limits and gluing constructions.

\begin{rmk}
  Most aspects of type theory aside from univalence (e.g.\ $\Sigma$-types, $\Pi$-types, and identity types) are now known to admit models in \emph{all} \io-toposes, and indeed in all locally presentable, locally cartesian closed \io-categories.
  By the coherence theorem of~\cite{lw:localuniv} (see also~\cite{awodey:natmodels}), it suffices to present such an \io-category by a ``type-theoretic model category'' in the sense of~\cite{shulman:invdia}, such as a right proper Cininski model category~\cite{cisinski:topos,cisinski:presheaves}.
  That this is always possible has been proven by Cisinski~\cite{cisinski:lccc-rpcmc} and by Gepner--Kock~\cite{gk:univlcc}.
  Moreover, if the \io-category is an \io-topos, then we can choose fibrations of fibrant objects representing its object classifiers, which will behave almost like universes and satisfy the univalence axiom.
  What is missing is that such ``universes'' need not be strictly closed under the type-forming operations; that is, the operation taking elements of the universe to types only respects these operations up to equivalence.
  It is this extra missing bit of strictness which we aim to provide here, in the special case of elegant Reedy presheaves.
\end{rmk}

In fact, a good deal of the proof that we will present is not special to elegant Reedy categories: it applies to any cofibrantly generated right proper model structure on a presheaf category whose cofibrations are the monomorphisms and such that the codomains of the generating acyclic cofibrations are representable.
The structure of the paper reflects this fact.
We begin in \S\ref{sec:on-univalence} with some remarks on how to construct univalent universes (that is, universe objects satisfying the univalence axiom).
In particular, we recall a method due to~\cite{klv:ssetmodel} which enables us to reduce statements about the universe, such as its univalence and fibrancy, to statements about fibrations.

In the next three sections \S\ref{sec:constr-univ}--\ref{sec:modeling-type-theory}, we show that any presheaf model category with the properties mentioned above satisfies almost all the requirements to represent a univalent universe in the internal type theory; the only thing missing is a proof that it is a fibrant object.
Specifically, in \S\ref{sec:constr-univ} we construct such universes in three ways (two of which are due to~\cite{klv:ssetmodel} and in~\cite{streicher:ttuniv}); in \S\ref{sec:equiv} we prove a postponed lemma from \S\ref{sec:on-univalence}; and in \S\ref{sec:modeling-type-theory} we verify the remaining requirements for modeling universes in type theory, using~\cite{lw:localuniv,klv:ssetmodel}.

Finally, in \S\ref{sec:elegant} we complete the proof in the case of elegant Reedy presheaves, showing that such presheaf categories satisfy the above conditions, and that moreover their universes are fibrant.
This depends on the explicit nature of the Reedy model structure.

A remark about notation: we will denote fibrations and cofibrations in any model category by $A\fib B$ and $A\cof B$, respectively.
Similarly, we write $A\afib B$ and $A\acof B$ for acyclic fibrations and acyclic cofibrations.

% \ack
\subsection*{Acknowledgments}

I am grateful to Peter Lumsdaine for many very helpful discussions, and for emphasizing the essential outline of the proof in~\cite{klv:ssetmodel}, as described in \S\ref{sec:on-univalence}.
I am also grateful to Bas Spitters, and to the referee, for helpful comments and suggestions.

\section{On proofs of univalence}
\label{sec:on-univalence}

The univalence axiom, when interpreted in a model category, is a statement about a ``universe object'' $U$, which is fibrant and comes equipped with a fibration $p\colon \Util \fib U$ that is generic, in the sense that any fibration with ``small fibers'' is a pullback of $p$.
(The meaning of ``small'' will vary with the model; the important thing for modeling type theory is that the small fibrations be closed under the category-theoretic analogues of all the type forming operations.
Thus, we usually take the small fibrations to be those of cardinality smaller than some inaccessible cardinal, for some appropriate meaning of ``cardinality''.
We will return to this in \S\ref{sec:modeling-type-theory}.)

In homotopy theory, it would be natural to ask for the stronger property that $U$ is a \emph{classifying space} for small fibrations, i.e.\ that homotopy classes of maps $A\to U$ are in bijection with (rather than merely surjecting onto) equivalence classes of small fibrations over $A$.
The univalence axiom is a further strengthening of this: it says that the path space of $U$ is equivalent to the ``universal space of equivalences'' between fibers of $p$ (which we will define in \S\ref{sec:equiv}).
In particular, therefore, if two pullbacks of $p$ are equivalent, then their classifying maps are homotopic.

It is not difficult to obtain a fibrant univalent universe that classifies small fibrations up to homotopy, i.e.\ such that any fibration with small fibers is a \emph{homotopy} pullback of the generic one.
For instance, one can simply choose any fibration between fibrant objects that represents an object classifier in the sense of~\cite[\S6.1.6]{lurie:higher-topoi}.
However, for modeling type theory we are concerned with the class of fibrations occurring as \emph{strict} pullbacks of the generic one.
Finding a fibrant universe that classifies all fibrations with small fibers in this strict sense is where the difficulties lie in modeling the univalence axiom.

Naively, we might expect that the construction of such an object would take place in four steps:
\begin{enumerate}[label=(\arabic*),leftmargin=*]
\item Construct a particular small fibration $p\colon \Util \fib U$.\label{item:u1}
\item Prove that every small fibration is a (strict) pullback of $p$.\label{item:u2}
\item Prove that $U$ is fibrant.\label{item:u3}
\item Prove that the univalence axiom holds.\label{item:u4}
\end{enumerate}
The proof in~\cite{shulman:invdia} that univalence lifts to inverse diagrams follows this outline: we construct a Reedy fibration $p$ which satisfies~\ref{item:u2} and~\ref{item:u3} almost by definition, and then~\ref{item:u4} follows by a somewhat lengthy, but direct, analysis of exactly what the univalence axiom claims.

The proof of univalence for simplicial sets in~\cite{klv:ssetmodel}, by contrast, follows a slightly different route.
They first construct a fibration $p\colon \Util \fib U$ which satisfies the following stronger version of~\ref{item:u2}:
\begin{enumerate}[label=(\arabic*\/$'$),leftmargin=*,start=2]
\item Given the solid arrows in the following diagram, where $A\cof B$ is a cofibration, $P\fib B$ is a small fibration, and both squares of solid arrows are pullbacks:
  \begin{tikzcenter}[->,>=angle 60,scale=.8]
    \node (A) at (0,0) {$A$};
    \node (Q) at (0,2) {$Q$};
    \node (U) at (3,0) {$U$};
    \node (Util) at (3,2) {$\Util$};
    \node (P) at (1.5,1) {$P$};
    \node (B) at (1.5,-1) {$B$};
    \draw[>->] (A) -- (B); \draw[->>] (Q) -- (A); \draw (Q) -- (Util);
    \draw (A) -- (U); \draw[->>] (Util) -- (U); \draw (Q) -- (P);
    \draw [white,line width=5pt] (P) -- (B); \draw[->>] (P) -- (B);
    \draw[dashed] (B) -- (U); \draw[dashed] (P) -- (Util);
  \end{tikzcenter}
  there exist the dashed arrows rendering the diagram commutative and the third square also a pullback.\label{item:u2p}
\end{enumerate}
In a context (such as simplicial sets) where all objects are cofibrant, taking $A=\emptyset$ in~\ref{item:u2p} yields~\ref{item:u2}.

Condition~\ref{item:u2p} can be rephrased in the following suggestive way: suppose for the sake of argument that there were a thing called \fU such that maps $A\to\fU$ were \emph{precisely} small fibrations over $A$.
Then the small fibration $p\colon \Util\fib U$ would be classified by a map $U\to \fU$, and~\ref{item:u2p} asserts that this map is an acyclic fibration.
(One can even make this precise by regarding \fU as a fibered category or groupoid.)

With~\ref{item:u2p} in hand,~\ref{item:u3} and~\ref{item:u4} can be reduced to statements not referring to $U$ at all.
For instance, suppose we can show:
\begin{enumerate}[label=(\arabic*\/$'$),leftmargin=*,start=3]
\item If $i\colon A\cof B$ is an acyclic cofibration and $P\fib A$ a small fibration, then there exists a small fibration $Q\fib B$ such that $P\cong i^* Q$.\label{item:u3p}
  In other words, the solid arrows below can be completed to a pullback square as shown:
  \begin{tikzcenter}[scale=1.3]
    \node (A) at (0,0) {$A$};
    \node (B) at (1,0) {$B$};
    \node (P) at (0,1) {$P$};
    \node (Q) at (1,1) {$Q$};
    \draw[>->] (A) -- node[above] {\scriptsize $i$} node[below] {\scriptsize $\sim$} (B);
    \draw[->>] (P) -- (A);
    \draw[->>,dashed] (Q) -- (B);
    \draw[dashed,->] (P) -- (Q);
  \end{tikzcenter}
\end{enumerate}
Then given an acyclic cofibration $i\colon A\cof B$ and a map $f\colon A\to U$,~\ref{item:u3p} gives us a fibration $Q$ over $B$ which pulls back to $f^*\Util$ over $A$, and by~\ref{item:u2p} we have $g\colon B\to U$ with $g^*\Util\cong Q$ and $g i = f$.
Thus,~\ref{item:u3} follows.
Intuitively, we are saying that since $U\to\fU$ is an acyclic fibration, if \fU is fibrant then so is $U$.

For~\ref{item:u4}, we need to know the category-theoretic expression of the univalence axiom.
This asserts that a canonical map $P U \to \nEq(\Util)$ is an equivalence, where $PU$ denotes the path object of $U$ and $\nEq(\Util)$ is the universal space of equivalences over $U\times U$;
we will define the latter precisely in \S\ref{sec:equiv}.
By the 2-out-of-3 property, this is equivalent to $U \to \nEq(\Util)$ being an equivalence, and therefore also to either projection $\nEq(\Util)\to U$ being an equivalence.
Since these projections are always fibrations, we want them to be acyclic fibrations; but acyclic fibrations are characterized by a lifting property.
If we rephrase this property of the second projection in terms of actual fibrations and equivalences (i.e.\ using the hypothetical \fU), we obtain:
% Suggested by referee
\def\Ebar{D}
\def\wbar{v}
\begin{enumerate}[label=(\arabic*\/$'$),leftmargin=*,start=4]
\item Suppose given a cofibration $i\colon A\cof B$, small fibrations $\Ebar_2 \fib B$ and $E_1 \fib A$, and an equivalence $w\colon E_1 \toiso E_2$ of fibrations over $A$, where $E_2 \coloneqq i^* \Ebar_2$.
  Then there exists a small fibration $\Ebar_1$ over $B$ and an equivalence $\wbar\colon \Ebar_1 \toiso \Ebar_2$ over $B$, which yields $w$ when pulled back along $i$.\label{item:u4p}
  \begin{tikzcenter}[->,>=angle 60,scale=1.8]
    \node (A) at (0,0) {$A$};
    \node (B) at (1.5,0) {$B$};
    \node (E1) at (-.3,1.3) {$E_1$};
    \node (E2) at (.3,.7) {$E_2$};
    \draw[->>] (E1) -- (A);
    \draw[->>] (E2) -- (A);
    \draw[>->] (A) -- node[auto,swap] {\scriptsize $i$} (B);
    \draw (E1) -- node[fill=white,inner sep=1pt] {\scriptsize $w$} (E2);
    \node (E2b) at (1.8,.7) {$\Ebar_2$};
    \draw[->>] (E2b) -- (B);
    \draw (E2) -- (E2b);
    \draw[-] (.5,.55) -- (.45,.45) -- (.35,.45);
    \begin{scope}[dashed]
      \node (E1b) at (1.2,1.3) {$\Ebar_1$};
      \draw (E1) -- (E1b);
      \draw (E1b) -- node[fill=white,inner sep=1pt,auto] {\scriptsize $\wbar$} (E2b);
      \draw[->>] (E1b) -- (B);
      \draw[-,solid] (0.22,1.1) -- (.25,1) -- (.15,1);
    \end{scope}
  \end{tikzcenter}
\end{enumerate}
If~\ref{item:u4p} holds, then for any commutative square
\begin{tikzcenter}[->,>=angle 60,scale=1.5]
  \node (A) at (0,1) {$A$};
  \node (B) at (0,0) {$B$};
  \node (U) at (1,0) {$U$};
  \node (Eq) at (1,1) {$\nEq(\Util)$};
  \draw (A) -- (Eq); \draw[->>] (Eq) -- (U); \draw[>->] (A) -- node[auto,swap] {\scriptsize $i$} (B); \draw (B) -- (U);
\end{tikzcenter}
with $i$ a cofibration, the given maps $A\to \nEq(\Util)$ and $B\to U$ respectively classify $w$ and $\Ebar_2$ as in~\ref{item:u4p}.
Then~\ref{item:u4p} gives $\Ebar_1$ and $\wbar$, condition~\ref{item:u2p} yields a classifying map for $\Ebar_1$ extending the composite $A\to \Util\to U$, and using the following lemma, we can construct a lift in the above square, so that $\nEq(\Util)\to U$ is an acyclic fibration.

\begin{lem}\label{thm:eqlift}
  In a suitable model category, let $\Ebar_1\fib B$ and $\Ebar_2\fib B$ be fibrations classified by maps $B\toto U$, let $\wbar\colon \Ebar_1\to \Ebar_2$ be a weak equivalence over $B$, let $i\colon A\cof B$ be a cofibration, and suppose we have a lift of $A \xto{i} B \to U\times U$ to $\nEq(\Util)$ which classifies $i^*(\wbar)$.
  Then this lift can be extended to $B$ so as to classify $\wbar$.
\end{lem}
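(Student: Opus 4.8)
The plan is to appeal directly to the construction and universal property of $\nEq(\Util)$ that will be set up in \S\ref{sec:equiv}. Recall that $\nEq(\Util)\fib U\times U$ is to be built as a subobject of the ``function space'' fibration $\fun(\Util)\fib U\times U$, in such a way that for any map $(g_1,g_2)\colon X\to U\times U$ there is a bijection, natural in $X$, between lifts of $(g_1,g_2)$ along $\fun(\Util)\to U\times U$ and maps $g_1^*\Util\to g_2^*\Util$ over $X$, under which lifts to the subobject $\nEq(\Util)$ correspond precisely to those maps that are weak equivalences. The naturality in $X$ here is just the stability of the function-space (exponential) construction under base change, which holds in any presheaf category (indeed in any locally cartesian closed category).

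First I would fix maps $g_1,g_2\colon B\to U$ classifying $\Ebar_1$ and $\Ebar_2$, so that $(g_1,g_2)\colon B\to U\times U$ is the base map over which everything takes place and the given partial lift lives over $A\xto{i}B\xto{(g_1,g_2)}U\times U$. Using the classifying pullback squares to identify $\Ebar_k\cong g_k^*\Util$, the weak equivalence $\wbar\colon\Ebar_1\to\Ebar_2$ over $B$ is transported to a weak equivalence $g_1^*\Util\to g_2^*\Util$ over $B$, and hence, by the universal property above, corresponds to a lift $\Hbar\colon B\to\nEq(\Util)$ of $(g_1,g_2)$. By construction $\Hbar$ classifies $\wbar$, so it only remains to check that $\Hbar$ restricts along $i$ to the given partial lift.

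For that I would invoke naturality of the bijection in $X$: pulling $\Hbar$ back along $i$ yields the lift of $A\xto{i}B\xto{(g_1,g_2)}U\times U$ corresponding to the restriction $i^*\wbar$ of the weak equivalence that $\Hbar$ classifies. The given partial lift, by hypothesis, also classifies $i^*\wbar$, i.e.\ corresponds under the bijection to the same map. Since for a fixed base map the correspondence between lifts to $\nEq(\Util)$ and weak equivalences of the two pulled-back fibrations is a bijection, the two lifts over $A$ coincide, and $\Hbar$ is the desired extension.

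The only step that uses anything beyond formal manipulation of pullbacks and exponentials — and the only place the hypotheses ``fibration'' and ``weak equivalence'' (as opposed to some stricter datum) enter — is the passage from $\wbar$ to the lift $\Hbar$: one must know that a map of fibrations over $B$ which is a weak equivalence really does determine a lift into the subobject $\nEq(\Util)$, and not merely into $\fun(\Util)$. This is exactly the characterization of $\nEq(\Util)$ to be established in \S\ref{sec:equiv} (for instance via bi-invertibility, or via contractibility of the fibers of $\wbar$), together with its stability under base change, so I expect no further obstacle: once that characterization is in place, the lemma follows immediately.
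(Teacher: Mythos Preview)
Your argument has a genuine gap: the correspondence between lifts $X\to\nEq(\Util)$ over $(g_1,g_2)$ and weak equivalences $g_1^*\Util\to g_2^*\Util$ is \emph{not} a bijection. By definition $\nEq(\Util)=\equiv_{U\times U}(\pi_1^*\Util,\pi_2^*\Util)$ is the total object of $\isequiv_{\fun}(h)$ over $\fun(\Util)$, so a lift to $\nEq(\Util)$ consists of a lift to $\fun(\Util)$ (which, as you say, is in strict bijection with maps $g_1^*\Util\to g_2^*\Util$) \emph{together with} a further lift to $\isequiv$, i.e.\ a section of the fibration $\isequiv_X(f)\fib X$. Such a section exists if and only if $f$ is a weak equivalence, but it is not unique on the nose---the fibers are only contractible, not singletons. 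Consequently the given partial lift over $A$ and the restriction $\Hbar\circ i$ both classify $i^*\wbar$ at the level of $\fun$, but their witness components in $\isequiv_A(i^*\wbar)$ need not agree, and your bijection step fails. A telling symptom is that your argument never uses the hypothesis that $i$ is a cofibration.

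The paper's proof repairs exactly this point. One first takes the map $k\colon B\to\fun_B(\Ebar_1,\Ebar_2)$ classifying $\wbar$; this part is canonical and does restrict to the $\fun$-component of the given partial lift over $A$. The remaining task is to extend the \emph{witness} from $A$ to $B$, i.e.\ to fill the square with $i\colon A\cof B$ on the left and $\isequiv_B(\wbar)\fib B$ on the right. Since $\wbar$ is a weak equivalence, \autoref{thm:isequiv} shows that $\isequiv_B(\wbar)\fib B$ is an \emph{acyclic} fibration, so the lift exists precisely because $i$ is a cofibration. That lifting step is the real content of the lemma.
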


In particular, if all objects are cofibrant (as will be the case in all our examples), then taking $A=\emptyset$ in \autoref{thm:eqlift} implies that any weak equivalence between fibrations over $B$ is classified by some map $B\to \nEq(\Util)$.

The proof of \autoref{thm:eqlift} is the only place where we need to know the actual definition of $\nEq(\Util)$.
This definition is determined by the specific formulation of the univalence axiom in type theory and is somewhat technical, so we will consider it separately in \S\ref{sec:equiv}, postponing the proof of \autoref{thm:eqlift} until then.
For now, it suffices to take \autoref{thm:eqlift} as a (hopefully plausible) black box.
In fact, one might argue that just as~\ref{item:u2p} determines a good notion of what it \emph{means} to be a universe object in a model category, the conclusion of \autoref{thm:eqlift} is a good definition of what it \emph{means} for $\nEq(\Util)$ to be a ``universal space of equivalences'' therein.

\section{Constructing univalent universes}
\label{sec:constr-univ}

Let us now consider in what level of generality we can prove~\ref{item:u2p},~\ref{item:u3p}, and~\ref{item:u4p}.
Perhaps surprisingly, given that~\ref{item:u4p} is a modification of the actual statement~\ref{item:u4} of univalence, it seems to be the easiest to prove in the most generality.
The proof in~\cite{klv:ssetmodel} for simplicial sets carries through almost word-for-word in a much more general context.

Let $\cC$ be a small category.
We say a morphism $f\colon A\to B$ in the presheaf category $\bSet^{\cC\op}$ is \textbf{\ka-small}, for some cardinal number \ka, if for all $c\in \cC$ and $b\in B_c$ we have $|f_c^{-1}(b)|<\ka$.
We denote by $|\cC|$ the cardinality of the coproduct set $\sum_{c,c'\in\cC} \cC(c,c')$ consisting of all arrows in $\cC$.

\begin{thm}\label{thm:4p}
  If $\bSet^{\cC\op}$ is a presheaf category that is a simplicial model category whose cofibrations are exactly the monomorphisms, and $\ka$ is a cardinal number larger than $|\cC|$, then the \ka-small fibrations in $\bSet^{\cC\op}$ satisfy~\ref{item:u4p}.
\end{thm}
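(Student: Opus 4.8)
The plan is to imitate, essentially verbatim, the argument for simplicial sets in \cite{klv:ssetmodel}. The only structural facts I would use are: that $\bSet^{\cC\op}$ is a topos, so the pullback functor $i^{*}$ along $i$ preserves colimits, monomorphisms, fibrations, acyclic fibrations, and the simplicial tensoring; that all objects are cofibrant, so the model structure is left proper and every slice is again a simplicial model category; and that $i$ is monic, so $i^{*}$ carries the monomorphism $E_2\cof\Ebar_2$ to an isomorphism (where $E_2=i^{*}\Ebar_2$). The construction then proceeds in three moves: build a naive candidate over $B$ restricting to $E_1$ over $A$; repair it to be a fibration without touching it over $A$; and repair the comparison map so that its restriction along $i$ is exactly $w$.

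First I would rectify $w$. As $E_1$ and $E_2$ are fibrant and cofibrant over $A$, the equivalence $w$ has a homotopy inverse $s\colon E_2\to E_1$ over $A$, so that $sw\sim\mathrm{id}$ and $ws\sim\mathrm{id}$ over $A$ (with respect to the cylinders $(-)\otimes\Delta^{1}$), and $w$ is in turn a homotopy inverse of $s$. For the naive candidate, let $F$ be the pushout of $E_1\xleftarrow{\,s\,}E_2\cof\Ebar_2$. Since $i^{*}$ preserves the pushout and sends $E_2\cof\Ebar_2$ to the identity, $i^{*}F=E_1$ with the coprojection $E_2\to E_1$ equal to $s$; a levelwise inspection of the pushout shows $F$ is $\ka$-small over $B$ (its fibre over a point of $A$ is that of $E_1$, and over a point outside $A$ is that of $\Ebar_2$); and by left properness the other coprojection $\mu\colon\Ebar_2\to F$, being a pushout of the weak equivalence $s$ along a cofibration, is a weak equivalence, with $i^{*}\mu=s$.

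The fibrant repair is where the real work lies, and the step I expect to be the main obstacle. Over $A$, $F$ is already the fibration $E_1\fib A$, so I would factor $F\to B$ as $F\cof\Ebar_1\fib B$ by a \emph{relative} small object argument that attaches cells only over $B\smallsetminus A$: this is legitimate because any lifting problem whose base lies in $A$ is already solved by $E_1\fib A$, so the genuinely new cells all sit over the complement of $A$; hence $i^{*}\Ebar_1=E_1$ and $i^{*}(F\cof\Ebar_1)=\mathrm{id}$. Here is exactly where the hypothesis $\ka>|\cC|$ is needed: one must check that the cells attached over each element of $B$ form a set of cardinality $<\ka$, so that each fibre of $\Ebar_1\to B$ is enlarged only $\ka$-smally and $\Ebar_1$ remains a $\ka$-small fibration. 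This cardinality bookkeeping, together with the verification that the relative factorization exists at all (which leans on the shape of the generating acyclic cofibrations), is the delicate part; everything else is soft. Writing $\nu\colon\Ebar_2\to\Ebar_1$ for $\mu$ followed by $F\cof\Ebar_1$, we have a weak equivalence over $B$ with $i^{*}\nu=s$.

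Finally I would rectify the comparison map. Since $\Ebar_1$ and $\Ebar_2$ are fibrant and cofibrant over $B$, $\nu$ is a homotopy equivalence, with a homotopy inverse $g\colon\Ebar_1\to\Ebar_2$ over $B$, which is again a weak equivalence. Applying $i^{*}$, which preserves the tensoring and hence homotopies, $i^{*}g$ is a homotopy inverse of $i^{*}\nu=s$ over $A$ — but so is $w$, and homotopy inverses are unique up to homotopy, so there is a homotopy $K\colon i^{*}g\sim w$ over $A$. Extending $K$ from $E_1=i^{*}\Ebar_1$ to $\Ebar_1$ — that is, lifting the map $(\Ebar_1\otimes\{0\})\cup_{E_1\otimes\{0\}}(E_1\otimes\Delta^{1})\to\Ebar_2$ given by $g$ on the first summand and by $K$ (composed with $E_2\cof\Ebar_2$) on the second against the fibration $\Ebar_2\fib B$, the relevant inclusion being the pushout--product of the cofibration $E_1\cof\Ebar_1$ with the acyclic cofibration $\{0\}\cof\Delta^{1}$, hence an acyclic cofibration — and evaluating at $1$ yields $\wbar\colon\Ebar_1\to\Ebar_2$ over $B$ with $\wbar\sim g$, so still a weak equivalence, and $i^{*}\wbar=w$. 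Then $(\Ebar_1,\wbar)$ is the data required by~\ref{item:u4p}.
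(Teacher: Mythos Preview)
Your argument has a genuine gap at the ``relative small object argument'' in step~3, and this is not a mere matter of missing detail: the justification you give is insufficient, and you even acknowledge that the step ``leans on the shape of the generating acyclic cofibrations,'' which are not constrained by the hypotheses of the theorem. The problem is that lifting problems whose base $Y\to B$ does \emph{not} factor through $A$ can still meet $A$ nontrivially; when you attach a cell for such a problem by pushing out $X\to Y$ along $X\to F$, you change $i^{*}F$ as well (since $i^{*}$ preserves pushouts and $i^{*}X\to i^{*}Y$ need not be an isomorphism). So ``the genuinely new cells all sit over the complement of $A$'' is false in general, and there is no evident way to run the small object argument while keeping $i^{*}(-)$ fixed. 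Without this, you have no fibration $\Ebar_1\fib B$ with $i^{*}\Ebar_1=E_1$, and the rest of your argument cannot proceed.

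The paper (following \cite{klv:ssetmodel}) sidesteps this entirely by using the \emph{right} adjoint $i_{*}\colon\bSet^{\cC\op}/A\to\bSet^{\cC\op}/B$ rather than a colimit-and-factor construction. One simply sets
\[
\Ebar_1 \;\coloneqq\; \Ebar_2 \times_{i_{*}i^{*}\Ebar_2} i_{*}E_1
\quad\text{with}\quad
\wbar \;\coloneqq\; \text{(first projection)}.
\]
Because $i$ is monic, $i_{*}$ is fully faithful, so $i^{*}$ of this pullback is $E_2\times_{E_2}E_1\cong E_1$ and $i^{*}\wbar=w$ automatically; no factorization is needed to get the correct restriction. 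What remains is to show $\Ebar_1\to B$ is a fibration and $\wbar$ a weak equivalence, and here the paper factors $w$ as an acyclic cofibration followed by an acyclic fibration. The second case is immediate ($i_{*}$ preserves acyclic fibrations, pullbacks preserve them). For the first case one uses that an acyclic cofibration between fibrant--cofibrant objects over $A$ admits a simplicial deformation retraction, and then extends that deformation retraction to one of $\Ebar_2$ onto $\Ebar_1$ over $B$ via a single lifting against a pushout--product acyclic cofibration. This last move is similar in spirit to your final homotopy-extension step, but applied to a retraction rather than to a comparison map, and crucially it comes \emph{after} $\Ebar_1$ has already been constructed with the right restriction.
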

\begin{proof}[Proof (from~\cite{klv:ssetmodel})]
  Suppose given a cofibration $i\colon A\cof B$, a \ka-small fibration $\Ebar_2 \fib B$, and an equivalence $w\colon E_1 \toiso E_2 \coloneqq i^* \Ebar_2$ of \ka-small fibrations over $A$; % as in~\ref{item:u4p};
  we want to construct $\Ebar_1$ and the dashed arrows shown in the diagram shown below~\ref{item:u4p}.
  Define $\Ebar_1$ and $\wbar$ as the following pullback in $\bSet^{\cC\op}/B$, where $i_*$ denotes the right adjoint of pullback $i^*$:
  \[\vcenter{\xymatrix{
      \Ebar_1\ar[r]\ar[d]_{\wbar} \pullbackcorner &
      i_* E_1\ar[d]^{i_*(w)}\\
      \Ebar_2\ar[r]_-\eta &
      i_* i^* \Ebar_2 \mathrlap{\;\cong i_* E_2}.
    }}\]
  Since $i^*$ preserves this pullback, and $i_*$ is fully faithful, % (since $i_!$ is, $i$ being a monomorphism),
  $\wbar$ pulls back to $w$.
  It is straightforward to check that $\Ebar_1\to B$ is \ka-small; it remains to show it is a fibration and that $\wbar$ is an equivalence.

  We factor $w$ as an acyclic cofibration followed by an acyclic fibration and treat the two cases separately.
  In the second case, % since $i_*$ preserves acyclic fibrations (as $i^*$ preserves monomorphisms),
  $i_*(w)$ is an acyclic fibration and thus so is $\wbar$. %; hence it is an equivalence and $\Ebar_1$, like $\Ebar_2$, is a fibration over $B$.
  In the first case, since $E_1$ and $E_2$ are fibrations over $A$, by~\cite[7.6.11 and 9.5.24]{hirschhorn:modelcats}, we have a simplicial deformation retraction $H\colon \Delta^1 \otimes E_2 \to E_2$ of $E_2$ onto $E_1$ in $\bSet^{\cC\op}/A$, where $\otimes$ denotes the tensor for the simplicial enrichment.
  Now $\eta$ and $\wbar$ are monic, so
  %$\eta\colon E_2 \to \Ebar_2$ is a monomorphism, being a pullback of $i$, as is $\wbar$, being a pullback of $i_*(w)$.
  % Thus, since $\sC$ is a topos,
  if $P$ denotes the pushout
  \[\vcenter{\xymatrix{
      E_1 \ar[r]\ar[d]_w &
      \Ebar_1\ar[d] \ar[ddr]^{\wbar}\\
      E_2 \ar[r] \ar[drr] &
      P \ar[dr]|(.3)j \pushoutcorner \\
      && \Ebar_2,
    }}\]
  then $j$ is also a monomorphism.
  Since we are in a simplicial model category, the pushout product on the left of the following square is an acyclic cofibration:
  \[\vcenter{\xymatrix{
      (\Delta^0 \otimes \Ebar_2) \cup_{\Delta^0 \otimes P} (\Delta^1 \otimes P)\ar[r]\ar[d] &
      \Ebar_2\ar[d]\\
      \Delta^1 \otimes \Ebar_2 \ar[r] \ar@{.>}[ur]_(.6)\Hbar &
      B.
    }}\]
  The map at the top is induced by the identity on $\Delta^0 \otimes \Ebar_2 \cong \Ebar_2$, and on $\Delta^1 \otimes P$ by a combination of $\eta H$ on $E_2$ and the constant homotopy at $\wbar$ on $\Ebar_1$ (which agree in $E_1$ since $H$ is a deformation retraction).
  Since $\Ebar_2\to B$ is a fibration, \Hbar exists, and since $i^*(\Hbar)=H$ is a deformation retraction into $E_1$, $\Hbar$ is a deformation retraction into $\Ebar_1$.
  Thus $\wbar$ is the inclusion of a deformation retract, hence a weak equivalence; and $\Ebar_1\to B$, being a retract of $\Ebar_2\to B$, is a fibration.
\end{proof}

I do not know any general context of this sort in which one can prove~\ref{item:u3p}.
However, the situation with~\ref{item:u2p} is better:

\begin{thm}\label{thm:wellordered}
  Suppose $\bSet^{\cC\op}$ is a presheaf category that is a cofibrantly generated model category in which all cofibrations are monomorphisms, and that the codomains of the generating acyclic cofibrations are representable.
  Then there exists a \ka-small fibration satisfying~\ref{item:u2p}.
\end{thm}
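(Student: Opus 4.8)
The plan is to construct $p\colon\Util\fib U$ as a rigidified Hofmann--Streicher universe of \ka-small fibrations, following~\cite{klv:ssetmodel}. Fix \ka, and for each $c\in\cC$ let $U_c$ be the set of \ka-small fibrations $E\fib\cC(-,c)$ that are \emph{rigidly presented}, meaning that $E_{c'}\subseteq\ka\times\cC(c',c)$ for every $c'$, with the structure map $E\to\cC(-,c)$ given by the second projection (the restriction maps of $E$ being otherwise unconstrained). Rigid presentation merely removes the choice of isomorphism that would otherwise make ``the collection of fibrations over $\cC(-,c)$'' a proper class, so $U_c$ is an honest set and $U$ an honest presheaf, with $U_f$, for $f\colon c'\to c$, the evident rigid pullback along $\cC(-,f)$. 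Let $\Util_c=\{(E,e):E\in U_c,\ e\in E_c\text{ lying over }\mathrm{id}_c\}$ with $p$ the first projection; then $p$ is \ka-small, since its fibre over $E$ embeds into the fibre of $E\to\cC(-,c)$ over $\mathrm{id}_c$, which has fewer than $\ka$ elements.

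Next I would record the ``weak'' universal property of $U$. Decomposing an arbitrary presheaf $X$ into representables and applying Yoneda, a map $X\to U$ unwinds to a \ka-small fibration $P\fib X$ together with a \emph{tagging}: a family of functions $a_c\colon P_c\to\ka$, one for each $c\in\cC$, each injective on every fibre of $P_c\to X_c$. The point is that a tagging carries no data beyond these fibrewise injections---the coherence required of a natural transformation $X\to U$ holds on the nose precisely because the presentation is rigid---and the pullback of $p$ along the corresponding map is canonically isomorphic to $P$; moreover every \ka-small fibration admits a tagging, since each fibre has fewer than $\ka$ elements. Condition~\ref{item:u2p} now follows. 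Given the solid data there, the left-hand pullback square lets me read off from the classifying map $A\to U$ a tagging $a^A$ of $Q=i^*P$; because $i\colon A\cof B$ is a monomorphism, $Q_c$ is identified with the set of sections of $P_c$ lying over $A_c\subseteq B_c$, so $a^A$ is a partial tagging of $P$, which I extend---independently over each object $c$ and each remaining section of $B_c$, using \ka-smallness of $P\fib B$---to a tagging $a^B$ of all of $P$. The classifying map $B\to U$ of $(P,a^B)$ restricts to the given $A\to U$ and exhibits $P$ as a pullback of $\Util$, which supplies the two dashed arrows with the third square a pullback, compatibly with the given data.

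The one place where the hypotheses on the model structure genuinely enter---and the step I expect to be the real content, since the rest is formal---is that $p\colon\Util\to U$ is a fibration at all. Since the model structure is cofibrantly generated it suffices to verify the right lifting property against each generating acyclic cofibration $j\colon K\cof L$; by hypothesis $L$ is representable, $L=\cC(-,c)$, so by Yoneda the map $L\to U$ in a lifting problem is a single element $E\in U_c$, the pullback of $p$ along it is literally $E\fib\cC(-,c)$, and the problem reduces to extending a partial section of $E$ along the acyclic cofibration $j$, which exists because $E\fib\cC(-,c)$ is a fibration. Hence $p$ has the right lifting property against all generating acyclic cofibrations, so it is a \ka-small fibration satisfying~\ref{item:u2p}. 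Two features of the setup do all the work: the rigid presentation makes ``classified by $U$'' a strict, choice-free notion, which is what lets partial classifications extend and hence powers~\ref{item:u2p}; and representability of the codomains of the generating acyclic cofibrations is what lets a lifting problem for $p$ see only one representable---hence only one honest fibration $E$---at a time, so that $p$ inherits fibrancy from the $E$'s. (If one wanted the extensions above to be canonical rather than merely to exist---not needed here, but useful downstream---one would fix once and for all a well-ordering of $\ka$ and tag each fibre by its order-isomorphism onto an initial segment; presumably this is the sense of ``well-ordered'' in the name.)
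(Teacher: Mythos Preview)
Your argument is correct. It is essentially the Voevodsky/KLV construction that the paper explicitly mentions as one of the two prior proofs (``imposing well-orderings on the fibers''), with the cosmetic simplification that you use arbitrary fibrewise injections into $\ka$ rather than order-isomorphisms onto initial segments; as you note at the end, the well-ordered version is recovered by fixing a well-order on $\ka$. The key steps---strict functoriality of the rigid pullback, the identification of maps $X\to U$ with tagged fibrations requiring no coherence between the tags at different $c$, the extension of a partial tagging along a monomorphism, and the reduction of the lifting problem for $p$ to a lifting problem for a single $E\fib\cC(-,c)$ via Yoneda and representability of $L$---are all sound.

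The paper, however, chooses to present a genuinely different third proof: it builds $\Util\fib U$ by a transfinite ``small object argument on $\fU$'', iteratively gluing in one copy of each \ka-small fibration $P\fib B$ for each generating cofibration $A\cof B$ and each map $A\to U_\alpha$, using extensivity, adhesivity, and exhaustivity of the topos to propagate the pullback and fibration conditions through the colimit. What your approach buys is concreteness and a direct description of the points of $U$; what the paper's approach buys is a clearer conceptual link to the $(\infty,1)$-categorical object classifiers of Rezk--Lurie, since it depends only on the pseudofunctor $B\mapsto\{\text{fibrations over }B\}$ taking certain colimits to limits, rather than on any explicit set-level bookkeeping. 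Both proofs use the representability hypothesis in the same way---to ensure that a lifting problem for $p$ sees only one representable at a time---but the paper's proof needs it twice (at limit and successor stages of the induction) rather than once.
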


In the special case of simplicial sets, this theorem has been proven by~\cite{klv:ssetmodel} and~\cite{streicher:ttuniv}, and their proofs generalize immediately to any category satisfying the stated hypotheses.
I will present a third, new, proof of this theorem, which I believe makes the connection to $(\infty,1)$-categorical object classifiers rather clearer.
But to facilitate comparisons, I will first sketch the main ideas of the proofs of~\cite{klv:ssetmodel} and~\cite{streicher:ttuniv}.

The basic idea of both of these proofs is that a presheaf $U$ is (of course) defined by giving its values at each object $c\in \cC$, while by the Yoneda lemma, elements of $U(c)$ are in bijective correspondence with maps $Y_c \to U$, where $Y_c$ is the representable presheaf at $c$.
Since maps into $U$ are supposed to classify small fibrations, we should define $U(c)$ to be some set of small fibrations over $Y_c$.
The problem is to choose a small set of representatives for this collection of fibrations in such a way that $U$ becomes a strict functor (rather than a pseudofunctor).
In~\cite{klv:ssetmodel} this is done by imposing well-orderings on the fibers, while in~\cite{streicher:ttuniv} it is done by considering presheaves on the category of elements of $Y_c$ (i.e.\ the slice category $\cC/c$).

By contrast, in the proof I will now present, we do not define $U$ by giving its value at each object.
Indeed, the fact that we are in a presheaf category makes no overt appearance; we will only need to know that the codomains of the generating acyclic cofibrations $X\acof Y$ have the property that $\hom(Y,-)$ preserves small colimits.
In addition, we will need the following ``exactness properties'' of any Grothendieck topos.
\begin{enumerate}[leftmargin=*,label=(\alph*)]
\item Given a family
  \[\vcenter{\xymatrix{
      X_i\ar[r]\ar[d] &
      Z\ar[d] \\
      A_i\ar[r] &
      \sum_i A_i
    }}\]
  of commutative squares in which the bottom family of morphisms are the injections into a coproduct $(A_i \to \sum_i A_i)_{i\in I}$ and the right-hand map is the same for all $i$, then the top family of morphisms form a coproduct diagram (so that $Z \cong \sum_i X_i$) if and only if all the squares are pullbacks.
  A category with this property is called (infinitary) \textbf{extensive}~\cite{clw:ext-dist}.
  Extensivity is equivalent to coproducts being stable and disjoint, and implies that coproducts preserve monomorphisms and pullback squares.
\item Given a commutative cube
  \[ \xymatrix@-.5pc{
    X \ar[rr] \ar[dd] \ar[dr] &&
    Z \ar'[d][dd] \ar[dr]\\
    & Y \ar[dd] \ar[rr] &&
    W \ar[dd]\\
    A \ar@{>->}[dr] \ar'[r][rr] & &
    C \ar[dr]\\
    & B \ar[rr] &&
    D
  }\]
  in which $A\cof B$ is a monomorphism, the bottom face is a pushout, and the left and back faces are pullbacks, then the top face is a pushout if and only if the front and right faces are pullbacks.
  A category with this property is called \textbf{adhesive}~\cite{ls:adhesive}.
  Adhesivity is equivalent to pushout squares of monomorphisms being also pullback squares and being stable under pullback~\cite{gl:adhesive}, and implies that the pushout of a monomorphism is a monomorphism.
\item Given a commutative diagram
  \[\vcenter{\xymatrix{
      X_0\ar[r]\ar[d] &
      X_1\ar[r]\ar[d] &
      \cdots\ar[r] &
      X_\al\ar[r]\ar[d] &
      \cdots\ar[r] &
      X_\la\ar[d]\\
      A_0\ar[r] &
      A_1\ar[r] &
      \cdots\ar[r] &
      A_\al\ar[r] &
      \cdots\ar[r] &
      A_\la
    }}\]
  of transfinite sequences for $\al <\la$, with \la some limit ordinal, in which the bottom row is a colimit diagram, and for each $\al<\be<\la$ the morphism $A_\al \to A_\be$ is a monomorphism and the square
  \[\vcenter{\xymatrix{
      X_\al\ar[r]\ar[d] &
      X_\be\ar[d]\\
      A_\al\ar[r] &
      A_\be
    }}\]
  is a pullback, then the top row is a colimit diagram if and only if for each $\al<\la$ the square
  \[\vcenter{\xymatrix{
      X_\al\ar[r]\ar[d] &
      X_\la\ar[d]\\
      A_\al\ar[r] &
      A_\la
    }}\]
  is a pullback.
  I have not been able to find a name in the literature for categories with this property; I propose to call them \textbf{exhaustive}.
  Exhaustivity is equivalent to asking that in a transfinite composite of monomorphisms, the coprojections into the colimit are also monomorphisms and the colimit is pullback-stable~\cite{nlab:exhaustive}.
  It implies that transfinite composites of monomorphisms preserve pullbacks, and hence also monomorphisms.
\end{enumerate}

\begin{proof}[%Third
  Proof of \autoref{thm:wellordered}]
  Let \cI be a generating set of cofibrations.
  The proof may be described as ``constructing a cofibrant replacement of \fU by the small object argument,'' where \fU is the hypothetical object classifying small fibrations on the nose.
% (This is similar to some proofs of the Brown representability theorem.)
  We define a transfinite sequence
  \[\vcenter{\xymatrix{
      \Util_0\ar@{>->}[r]\ar@{->>}[d] &
      \Util_1\ar@{>->}[r]\ar@{->>}[d] &
      \Util_2\ar@{>->}[r]\ar@{->>}[d] &
      \dots\ar@{>->}[r] &
      \Util_\al\ar@{>->}[r]\ar@{->>}[d] &
      \dots\\
      U_0\ar@{>->}[r] &
      U_1\ar@{>->}[r] &
      U_2\ar@{>->}[r] &
      \dots \ar@{>->}[r] &
      U_\al\ar@{>->}[r] &
      \dots
    }}\]
  such that
  \begin{enumerate}[leftmargin=*]
  \item Each map $\Util_\al \to U_\al$ is a \ka-small fibration.\label{item:af1}
  \item For $\al<\be$, the map $U_\al \to U_\be$ is monic.\label{item:af3}
  \item For $\al<\be$, the square
    \[\vcenter{\xymatrix{
        \Util_\al\ar@{>->}[r]\ar@{->>}[d] &
        \Util_\be\ar@{->>}[d]\\
        U_\al\ar@{>->}[r] &
        U_\be
      }}\]
    is a pullback (hence $\Util_\al \to \Util_\be$ is also monic).\label{item:af2}
  \end{enumerate}
  For limit $\al$, we take colimits of both sequences.
  (Including $\al=0$ as a limit, this means we begin with $\Util_0 = U_0 = \emptyset$).
  By induction, these are colimits of monomorphisms and all intermediate squares are pullbacks.
  Thus, by exhaustivity,~\ref{item:af3} and~\ref{item:af2} remain true in the colimit.
  For~\ref{item:af1}, it suffices to show that every commutative square as on the left below has a lift, where $X\cof Y$ is a generating acyclic cofibration.
  \begin{equation}
    \vcenter{\xymatrix{
        X \ar[r]\ar[d] &
        \Util_\al\ar[d]\\
        Y\ar[r] &
        U_\al
      }}
    \quad=\quad
    \vcenter{\xymatrix{
        X \ar[r]\ar[d] &
        \Util_\gm \ar[d] \ar[r] \pullbackcorner &
        \Util_\al\ar[d]\\
        Y\ar[r] &
        U_\gm \ar[r] &
        U_\al.
      }}
  \end{equation}
  However, by assumption this means the object $Y$ is a representable presheaf, and thus the map $Y \to U_\al$ factors through $\Util_\gm$ for some $\gm<\al$.
  Since $\Util_\gm$ is the pullback of $\Util_\al$ to $U_\gm$, any commutative square as on the left above factors as on the right, and since $\Util_\gm \to U_\gm$ is a fibration we can find a lift.

  At a successor stage, given $\Util_\al \to U_\al$ we consider the set of pairs $(i,f,p)$, where
  \begin{itemize}[leftmargin=1.5em]
  \item $i\colon A\to B$ is in \cI;
  \item $f\colon A\to U_\al$ is any morphism; and
  \item $p\colon P\to B$ lies in a small set of representatives for isomorphism classes of \ka-small fibrations into $B$ which are equipped with an isomorphism $i^*P \cong f^*\Util_\al$.
  \end{itemize}
  We define $\Util_{\al+1} \to U_{\al+1}$ to make the top and bottom squares of the following cube into pushouts:
  \[ \xymatrix@-.5pc{
    \sum_{(i,f,p)} f^*\Util_\al \ar[rr] \ar[dd] \ar[dr] &&
    \Util_\al \ar'[d][dd] \ar[dr]\\
    & \sum_{(i,f,p)} P \ar[dd]^(.3){\sum p} \ar[rr] &&
    \Util_{\al+1} \ar[dd]\\
    \sum_{(i,f,p)} A \ar[dr]_{\sum i} \ar'[r]^-{[f]}[rr] & &
    U_\al \ar[dr]\\
    & \sum_{(i,f,p)} B \ar[rr] &&
    U_{\al+1}.
  }\]
  Note that the coproducts $\sum_{(i,f,p)}$ exist because the set of triples $(i,f,p)$ is small, for which purpose it is essential that $p$ belong to a small set of representatives for isomorphism classes.

  Now by extensivity, $\sum i$ is monic, so by adhesivity, so is $U_\al \to U_{\al+1}$, giving~\ref{item:af3}.
  Likewise, by extensivity, the left and back faces are pullbacks, so by adhesivity, so are the right and front faces, giving~\ref{item:af2}.

  Finally, for any generating acyclic cofibration $X\to Y$, since $Y$ is representable, any map $Y \to U_{\al+1}$ factors through $\sum_{(i,f,p)} B$ or $U_\al$.
  Since the front and right faces are pullbacks, it follows that any commutative square of the form
  \begin{equation}
    \vcenter{\xymatrix{
        X \ar[r]\ar[d] &
        \Util_{\al+1}\ar[d]\\
        Y\ar[r] &
        U_{\al+1}
      }}\label{eq:pushout-lift}
  \end{equation}
  factors through either $\sum p$ or $\Util_\al \to U_\al$, both of which are fibrations (the former by a similar argument using extensivity).
  Thus, we can lift in any square~\eqref{eq:pushout-lift}, so $\Util_{\al+1} \to U_{\al+1}$ is a \ka-small fibration; thus~\ref{item:af1} holds.

  Now since a presheaf category is locally presentable, there exists $\la$ such that the domains of all morphisms in \cI are $\la$-compact (i.e.\ their covariant representable functors preserve \la-filtered colimits).
  For such a $\la$, if $i\colon A\cof B$ is in \cI, and $f\colon A\to U_\la$ is a morphism, and $p\colon P\fib B$ is a \ka-small fibration with $i^*P \cong f^*\Util_\la$, then by \la-compactness of $A$, $f$ factors through $U_\al$ for some $\al<\la$.
  By construction, $(i,f,p)$ then induces a map $g\colon B\to U_{\al+1}$ with $g^*\Util_{\al+1} \cong P$; so~\ref{item:u2p} holds for $i\in\cI$.
  It will suffice, therefore, to prove the following lemma.
\end{proof}

\begin{lem}
  In a category satisfying the assumptions of \autoref{thm:wellordered}, let $p:\Util\to U$ be a fibration.
  Then the class of monomorphisms $i$ satisfying~\ref{item:u2p} with respect to $p$ is closed under pushout, transfinite composition, and retracts (i.e.\ it is ``saturated'').
\end{lem}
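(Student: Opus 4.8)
The plan is to verify each of the three closure operations by hand, throughout using the following reading of ``$i\colon A\cof B$ satisfies~\ref{item:u2p}'': for every $\ka$-small fibration $P\fib B$, every $g_0\colon A\to U$, and every isomorphism $\phi_0\colon i^*P\toiso g_0^*\Util$ over $A$, there are a map $g\colon B\to U$ with $gi=g_0$ and an isomorphism $\psi\colon P\toiso g^*\Util$ over $B$ with $i^*\psi=\phi_0$. The compatibility $i^*\psi=\phi_0$ is exactly the demand in~\ref{item:u2p} that the dashed map $P\to\Util$ restrict along $i$ to the given $Q\to\Util$, and keeping track of it is what makes the inductions below close up. Since retracts, pushouts, and transfinite composites of monomorphisms are again monomorphisms (the latter two by adhesivity and exhaustivity, as recalled in \S\ref{sec:constr-univ}), it remains to see that the extension property is inherited. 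Closure under retracts is a routine chase: given a retract $i'\colon A'\cof B'$ of such an $i$, with sections $s_A,s_B$ of retractions $r_A,r_B$ commuting with $i,i'$, one pulls the data $(P',g'_0,\phi'_0)$ back along $r_B,r_A$, applies~\ref{item:u2p} for $i$, then precomposes the resulting $g$ with $s_B$ and transports $\psi$ along $s_B$; the identities $r_As_A=\mathrm{id}$, $r_Bs_B=\mathrm{id}$, $is_A=s_Bi'$ yield $g'i'=g'_0$ and ${i'}^*\psi'=\phi'_0$, with no surprises.

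For pushouts, let $j\colon C\cof D$ be the pushout of $i\colon A\cof B$ along $f\colon A\to C$, with other leg $\bar f\colon B\to D$, and suppose given $(P\fib D,\ h_0\colon C\to U,\ \phi\colon j^*P\toiso h_0^*\Util)$. I would set $P_B:=\bar f^*P$, $g_0:=h_0f$, $\phi_0:=f^*\phi$ (an isomorphism $i^*P_B\cong g_0^*\Util$, since $\bar fi=jf$), apply~\ref{item:u2p} for $i$ to obtain $g\colon B\to U$ with $gi=g_0=h_0f$ and $\psi\colon P_B\toiso g^*\Util$ with $i^*\psi=\phi_0$, and take $h\colon D\to U$ to be the unique map with $h\bar f=g$, $hj=h_0$. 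The one substantive step is to build an isomorphism $P\cong h^*\Util$ over $D$ restricting to $\psi$ over $B$ and $\phi$ over $C$, and here adhesivity enters: applied to the cube over the pushout square $A,B,C,D$ whose top face has vertices $(\bar fi)^*P,\ P_B,\ j^*P,\ P$ — the left and back faces are pullbacks because $(\bar fi)^*P$ is the common restriction to $A$, and $A\cof B$ is monic — it shows the top face is again a pushout, so $P\cong P_B\cup_{(\bar fi)^*P}j^*P$ with the coprojections equal to the restriction maps. The identical argument with $h^*\Util$ in place of $P$ gives $h^*\Util\cong g^*\Util\cup_{g_0^*\Util}h_0^*\Util$; since $i^*\psi=\phi_0=f^*\phi$, the pair $(\psi,\phi)$ is an isomorphism of the corresponding spans, and the induced isomorphism of pushouts is the one we want.

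For transfinite composition, let $(A_\al)_{\al\le\la}$ be the chain, with each $A_\al\cof A_{\al+1}$ in the class and limit stages colimits, and suppose given $(P\fib A_\la,\ g_0\colon A_0\to U,\ \phi_0)$; write $P_\al$ for the restriction of $P$ along $A_\al\to A_\la$. I would construct, by transfinite recursion on $\al\le\la$ with $(g_0,\phi_0)$ as base case, maps $g_\al\colon A_\al\to U$ and isomorphisms $\phi_\al\colon P_\al\toiso g_\al^*\Util$ over $A_\al$, compatible under restriction: at a successor, apply~\ref{item:u2p} for $A_\al\cof A_{\al+1}$ to the data $(P_{\al+1},g_\al,\phi_\al)$ (valid since $(A_\al\to A_{\al+1})^*P_{\al+1}=P_\al$); at a limit $\be$, take $g_\be:=\colim_{\al<\be}g_\al$ and use exhaustivity to identify $P_\be$ with $\colim_{\al<\be}P_\al$ and $g_\be^*\Util$ with $\colim_{\al<\be}g_\al^*\Util$ (the relevant squares being pullbacks and the transition maps monic, as exhaustivity requires), so that $\colim_{\al<\be}\phi_\al$ supplies $\phi_\be$. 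Evaluating at $\al=\la$ produces $g_\la\colon A_\la\to U$ and $\phi_\la\colon P\toiso g_\la^*\Util$ extending $(g_0,\phi_0)$, which is exactly~\ref{item:u2p} for $A_0\cof A_\la$.

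The step I expect to be the main obstacle is precisely this descent bookkeeping in the pushout and transfinite cases: one must ensure that every $\ka$-small fibration occurring, and every classifying isomorphism between two of them, genuinely is the pushout (resp.\ colimit) of its restrictions and that the canonical comparison maps are isomorphisms — which is exactly what adhesivity and exhaustivity deliver, the monomorphism hypotheses being available since the maps in sight all belong to a class of monomorphisms. Once that descent is secured, the compatibility clauses $i^*\psi=\phi_0$ built into~\ref{item:u2p} make the gluing of the classifying data canonical, and nothing further is required.
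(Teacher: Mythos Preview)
Your proposal is correct and follows essentially the same approach as the paper's proof: adhesivity for the pushout case, exhaustivity for the transfinite case, and a direct chase for retracts. The only difference is presentational---the paper works diagrammatically with maps into $U$ and $\Util$ and appeals to ``stability of pushouts/transfinite composites under pullback'' for the final pullback verification, whereas you explicitly name and track the isomorphisms $\phi_\al$ and glue them via the colimit decompositions of $P$ and $h^*\Util$; these are two phrasings of the same argument.
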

\begin{proof}
  For closure under pushouts, suppose given the solid arrows in the following diagram:
  \[ \xymatrix@-.5pc{
    X \ar@{.>}[rr] \ar@{.>>}[dd] \ar@{.>}[dr] &&
    Z \ar@{->>}'[d][dd] \ar[dr] \ar[rr] &&
    \Util \ar@{->>}[dd]\\
    & Y \ar@{.>>}[dd] \ar@{.>}[rr] &&
    W \ar@{->>}[dd] \ar@{-->}[ur] \\
    A \ar@{>->}[dr]_i \ar'[r][rr] & &
    C \ar@{>->}[dr]^j \ar'[r][rr] && U\\
    & B \ar[rr] &&
    D\ar@{-->}[ur]
  }\]
  where the bottom square is a pushout, $i$ (hence also $j$) is monic, and the other two squares of solid arrows are pullbacks.
  Then we can fill in the objects $X$ and $Y$ and the dotted arrows to make all vertical faces of the cube pullbacks; hence by adhesivity the top face is a pushout.
  Assuming $i$ satisfies~\ref{item:u2p}, we have a map $B\to U$ which pulls back $\Util$ to $Y$ compatibly; thus the universal property of pushouts induces the dashed arrows shown.
  Finally, stability of pushouts under pullback implies that the square involving the dashed arrows is also a pullback.

  For closure under transfinite composites, suppose $A_0 \cof A_\la$ is a transfinite composite of monomorphisms, and suppose given the solid arrows in the following diagram making the left-hand rhombus and the outer rectangle pullbacks:
  \[\vcenter{\xymatrix{
      \\
      &X_0\ar@{.>}[r]\ar@{->>}[d] \ar[dl] \ar@(ur,ul)[rrrrr] &
      X_1\ar@{.>}[r]\ar@{.>>}[d] &
      \cdots\ar@{.>}[r] &
      X_\al\ar@{.>}[r]\ar@{.>>}[d] &
      \cdots\ar@{.>}[r] &
      X_\la\ar@{->>}[d]\\
      \Util\ar@{->>}[d] &A_0\ar[r] \ar[dl] &
      A_1\ar[r] &
      \cdots\ar[r] &
      A_\al\ar[r] &
      \cdots\ar[r] &
      A_\la\\
      U.
    }}\]
  Then we can fill in the $X_\al$ and the dotted arrows making the other squares all pullbacks; hence by exhaustivity the top row is a colimit.
  Assuming each $A_\al \cof A_\be$ satisfies~\ref{item:u2p}, we can successively extend the maps $A_0 \to U$ and $X_0 \to \Util$ to all $A_\al$ and $X_\al$, and hence in the colimit to $A_\la$ and $X_\la$.
  Finally, stability of transfinite composites under pullback implies that the induced squares are all also pullbacks.

  For closure under retracts, suppose given the following solid arrows:
  \[ \xymatrix@-.5pc{
    Z \ar@{.>}[rr] \ar@{->>}[dd] \ar[dr] &&
    X \ar@{.>}[rr] \ar@{.>>}[dd] \ar@{.>}[dr] &&
    Z \ar@{->>}'[d][dd] \ar[dr] \ar[rr] &&
    \Util \ar@{->>}[dd]\\
    & W\ar[dd] \ar@{.>}[rr] &&
    Y \ar@{.>>}[dd] \ar@{.>}[rr] &&
    W \ar@{->>}[dd] \ar@{-->}[ur] \\
    C \ar@{>->}[dr]_j \ar'[r][rr] &&
    A \ar@{>->}[dr]_i \ar'[r][rr] &&
    C \ar@{>->}[dr]^j \ar'[r][rr] && U\\
    & D \ar[rr] &&
    B \ar[rr] &&
    D\ar@{-->}[ur]
  }\]
  where the composites $C\to A\to C$ and $D\to B\to D$ are identities.
  Then we can fill in $X$ and $Y$ and the dotted arrows making all squares pullbacks.
  Assuming $i$ satisfies~\ref{item:u2p}, we have a map $B\to U$ compatibly classifying $Y$, and then the composite $D\to B\to U$ compatibly classifies $W$.
\end{proof}

I say that this proof makes the connection to object classifiers clearer because it depends mainly on the fact that the pseudo 2-functor
\begin{align*}
  \left(\bSet^{\cC\op}\right)\op &\to \bCat\\
  B &\mapsto \{ \text{fibrations over } B \}
\end{align*}
preserves coproducts, pushouts of monomorphisms, and transfinite composites of monomorphisms.
These can of course be regarded as ``stack'' conditions.
Moreover, since the monomorphisms in question are cofibrations, these colimits are also \emph{homotopy} colimits.

By comparison, in~\cite[6.1.6.3]{lurie:higher-topoi} object classifiers in an $(\infty,1)$-category \sC are constructed under the assumption that the $(\infty,1)$-functor
\begin{align*}
  \sC\op &\to (\infty,1)\bCat\\
  B &\mapsto \{ \text{all morphisms into } B \}
\end{align*}
preserves \emph{all} (homotopy) colimits.
In this situation one can simply apply the $(\infty,1)$-categorical adjoint functor theorem, but this could be unraveled more explicitly into a transfinite construction very like that in the third proof of \autoref{thm:wellordered}.

In conclusion, we have a general context which is \emph{almost} enough to construct fibrant univalent universes; in any particular situation it suffices to check the one remaining condition~\ref{item:u3p}.
We will do this for elegant simplicial presheaves in \S\ref{sec:elegant}; but first we have some holes to fill in.

\section{Universal spaces of equivalences}
\label{sec:equiv}

In this section we will define the universal space of equivalences $\nEq(\Util)$ and prove \autoref{thm:eqlift}.
The definition is exactly the categorical interpretation of Voevodsky's type-theoretic definition of equivalences.
However, in keeping with the tone of this paper, we will describe it without reference to type theory.

For all of this section, let \sM be a locally cartesian closed, right proper, simplicial model category whose cofibrations are the monomorphisms.
This is roughly what is needed for it to interpret type theory (it is somewhat stronger than being a \emph{type-theoretic model category} in the sense of~\cite{shulman:invdia}).
In particular, all objects of \sM are cofibrant.

Local cartesian closure implies that for any $f:A\to B$, the pullback functor $f^*:\sM/B \to \sM/A$ has a right adjoint, which we denote $\Pi_f$.
By adjointness, since $f^*$ preserves cofibrations (i.e.\ monomorphisms), $\Pi_f$ preserves acyclic fibrations.
Of course, $f^*$ also has a left adjoint given by composing with $f$, which we denote $\Sigma_f$.
If $f$ is a fibration, then $\Sigma_f$ maps every fibration $p:E\fib A$ to a fibration $fp:E\fib B$.
We write $\Pi_A$ and $\Sigma_A$ when $f$ is the map $A\to 1$ to the terminal object.

In general, the goal of the constructions we will present is to ``internalize'' statements like ``$f$ and $g$ are homotopic'' or ``$f$ is an equivalence''.
By this we mean, to first approximation, that we construct an object $V$ of $\sM$ such that there is a map $1\to V$ if and only if the statement in question holds.
(More precisely, we want there to be a map $X\to V$ if and only if the statement holds after pulling back to $\sM/X$.)
We may think of $V$ as a ``space'' whose points are, up to homotopy, \emph{assertions} or \emph{witnesses} of the statement in question --- but we construct it abstractly, using category-theoretic operations, rather than any knowledge we have about how the objects of $\sM$ are put together.

For example, given suppose given two maps $f,g:A\to B$ between fibrant objects, and suppose we would like to internalize the statement ``$f$ is homotopic to $g$''.
Externally (i.e.\ as a statement about \sM), to say that $f$ is homotopic to $g$ is to say that $(f,g):A\to B\times B$ lifts to a path object $P B$ for $B$.
Equivalently, this means that the pullback fibration $(f,g)^* P B \fib A$ has a section.
Finally, by adjointness, to give a section of this fibration is equivalent to giving a map $1 \to \Pi_A (f,g)^* (P B)$.
Thus, $\Pi_A (f,g)^* (P B)$ is a good choice for an internalization of ``$f$ is homotopic to $g$''.
(One can check that, in fact, there is a map $X\to \Pi_A (f,g)^* (P B)$ if and only if $X\times f$ is homotopic to $X\times g$ in $\sM/X$.)

Our primary interest in this section is in internalizing the statement ``$f$ is an equivalence''.
There are now many known ways to do this; we use the original one due to Voevodsky.
Let $p:E\fib B$ be a fibration between fibrant objects; we begin by internalizing the statement ``$p$ is an acyclic fibration.''
By~\cite[7.6.11(2)]{hirschhorn:modelcats}, $p$ is an acyclic fibration if and only if there is a section $s:B\to E$ (so that $p s = 1_B$) and a fiberwise homotopy $s p \sim 1_E$ (i.e.\ a homotopy in $\sM/B$).

Let $P_B E = (E\fib B)^{\Delta^1}$ be the cotensor in $\sM/B$ of $E\fib B$ by the standard interval $\Delta^1$.
Since $E$ is fibrant in $\sM/B$, $P_B E$ is a valid path object for it, i.e.\ we have an acyclic cofibration $E \acof P_B E$ and a fibration $P_B E \fib E\times_B E$ factoring the diagonal $E\to E\times_B E$.
Thus, given $s:B\to E$ with $p s = 1_B$, a fiberwise homotopy $s p \sim 1_E$ means a lift of $(sp,1_E) : E \to E\times_B E$ to $P_B E$.
Now we have a pullback square
\begin{equation}
  \vcenter{\xymatrix@C=3pc{
      E\ar[r]^-{(p s, 1_E)}\ar[d]_p \pullbackcorner &
      E\times_B E\ar[d]^{\pi_2}\\
      B\ar[r]_s &
      E
    }}
\end{equation}
where $\pi_2:E\times_B E \to E$ denotes the second projection.
Therefore, a lift of $(sp,1_E)$ to $P_B E$ is equivalent to a lift of $s:B\to E$ to $\Pi_{\pi_2} (P_B E)$.
Since $s$ is a section of $p$, to give $s$ and the homotopy $s p \sim 1_E$ together is equivalent to giving a section of the composite $\Pi_{\pi_2} (P_B E) \to E \xfib{p} B$, i.e.\ of $\Sigma_p \Pi_{\pi_2} (P_B E)$.
This motivates us to define
\begin{equation}
  \iscontr_B(E) \coloneqq \Sigma_p \Pi_{\pi_2} (P_B E).
\end{equation}
Note that $\iscontr_B(E)$ is an object of $\sM/B$.
We regard it as a $B$-indexed family of spaces internalizing, for each $b\in B$, the assertion that the fiber $p^{-1}(b)$ is contractible.

\begin{lem}\label{thm:iscontr}
  For a fibration $p:E\fib B$, the following are equivalent:
  \begin{enumerate}
  \item $p$ is an acyclic fibration.\label{item:ic1}
  \item $\iscontr_B(E) \fib B$ has a section.\label{item:ic2}
  \item There is a map $1 \to \Pi_B \iscontr_B(E)$.\label{item:ic2a}
  \item $\iscontr_B(E) \fib B$ is an acyclic fibration.\label{item:ic3}
  \end{enumerate}
\end{lem}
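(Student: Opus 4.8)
The plan is to prove the cycle \ref{item:ic1} $\Rightarrow$ \ref{item:ic3} $\Rightarrow$ \ref{item:ic2} $\Leftrightarrow$ \ref{item:ic2a} $\Rightarrow$ \ref{item:ic1}, using the motivating discussion above as a blueprint but being careful about which directions need which ingredients. The implications \ref{item:ic3} $\Rightarrow$ \ref{item:ic2} is trivial, and \ref{item:ic2} $\Leftrightarrow$ \ref{item:ic2a} is pure adjunction: a section of $\iscontr_B(E)\fib B$ is a map $1\to \iscontr_B(E)$ over $B$, which transposes across $\Sigma_B\dashv \text{(pullback)}\dashv\Pi_B$ to a map $1\to\Pi_B\iscontr_B(E)$ (and this uses only that the relevant objects are fibrant so that $\Pi_B$ computes the homotopically correct thing, though at this level it is just the 1-categorical adjunction).

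For \ref{item:ic2} $\Rightarrow$ \ref{item:ic1}: given a section of $\Sigma_p\Pi_{\pi_2}(P_B E)\fib B$, unwind the definition exactly as in the paragraph preceding the lemma but in reverse. The section of $\Sigma_p(-)$ factors through a section $s\colon B\to E$ of $p$ together with a section of $\Pi_{\pi_2}(P_B E)$ pulled back along $s$; transposing the latter across $\pi_2^*\dashv\Pi_{\pi_2}$ and using the displayed pullback square gives a lift of $(sp,1_E)\colon E\to E\times_B E$ to $P_B E$. Since $P_B E\fib E\times_B E$ is a path object for $E$ in $\sM/B$, this lift is precisely a fiberwise homotopy $sp\sim 1_E$. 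By \cite[7.6.11(2)]{hirschhorn:modelcats}, a fibration admitting a section $s$ with $sp$ fiberwise-homotopic to $1_E$ is an acyclic fibration.

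The substantive direction is \ref{item:ic1} $\Rightarrow$ \ref{item:ic3}, and this is where I expect the main obstacle to lie. The cheap half is that $\iscontr_B(E)\fib B$ is always a \emph{fibration}: $P_B E\fib E\times_B E$ is a fibration in $\sM/B$, hence (via $\pi_2$) a fibration over $E$; $\Pi_{\pi_2}$ of a fibration is a fibration because $\pi_2$ is a fibration and $\sM$ is right proper / locally cartesian closed in the appropriate sense; and $\Sigma_p$ of a fibration over $E$ is a fibration over $B$ since $p$ is a fibration. So it remains to show that when $p$ is acyclic this fibration is also a \emph{weak equivalence}. The natural route is to produce a section (which is exactly \ref{item:ic1} $\Rightarrow$ \ref{item:ic2}, done by running the previous paragraph backwards: an acyclic fibration $p$ has a section $s$, and $sp\sim 1_E$ fiberwise because $p$ is acyclic, giving the required lift into $P_B E$) and then upgrade ``fibration with a section'' to ``acyclic fibration.'' That upgrade is not automatic, so I would instead argue that $\iscontr_B(E)$ has \emph{contractible fibers} in the strong sense that the statement being internalized is stable under base change: for every $X\to B$ the pulled-back object $\iscontr_X(X\times_B E)$ has a section, because $X\times_B E\fib X$ is again an acyclic fibration (acyclic fibrations are pullback-stable). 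Having a section after every base change — in particular along all the generating cofibrations — gives the right lifting property against all cofibrations, hence $\iscontr_B(E)\fib B$ is an acyclic fibration. The delicate point to get right is that $\iscontr$ commutes with pullback, i.e.\ $g^*\iscontr_B(E)\cong \iscontr_{B'}(g^*E)$ for $g\colon B'\to B$; this is a Beck--Chevalley computation for the $\Sigma$, $\Pi$, and cotensor operations involved, and it is the one place where the locally cartesian closed and simplicial structure must be used carefully rather than formally.
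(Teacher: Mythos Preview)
Your argument for \ref{item:ic1}$\Rightarrow$\ref{item:ic3} has a genuine gap. The claim that ``having a section after every base change gives the right lifting property against all cofibrations'' is false. A lifting problem for $\iscontr_B(E)\fib B$ against a cofibration $i\colon A\cof B'$ (over some map $B'\to B$) asks you to extend a \emph{given} partial section $A\to \iscontr_B(E)$ to all of $B'$, not merely to produce \emph{some} section over $B'$. For a concrete counterexample to the principle in \sSet: the fibration $S^0\to \ast$ has a section after every base change, yet is not acyclic. What would be needed in addition is that $\iscontr_B(E)$ is fiberwise an h-proposition (so that any two partial sections are homotopic and can be adjusted to agree), but establishing that is not easier than the direct argument. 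The Beck--Chevalley compatibility you flag as ``the delicate point'' is in fact fine and is used in the paper's next lemma; the problem lies in the step before it.

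The paper's proof of \ref{item:ic1}$\Rightarrow$\ref{item:ic3} is shorter and avoids all of this: if $p$ is acyclic, then both projections $E\times_B E\to E$ are weak equivalences, so by 2-out-of-3 the diagonal $E\to E\times_B E$ is a weak equivalence, and hence $P_B E\fib E\times_B E$ is an \emph{acyclic} fibration. Since $\Pi_{\pi_2}$ preserves acyclic fibrations (its left adjoint $\pi_2^*$ preserves cofibrations, i.e.\ monomorphisms), $\Pi_{\pi_2}(P_B E)\fib E$ is acyclic; composing with the acyclic fibration $p$ gives the result. Your treatment of the remaining implications matches the paper.
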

\begin{proof}
  We have already argued that~\ref{item:ic1}$\Leftrightarrow$\ref{item:ic2}, and \ref{item:ic2}$\Leftrightarrow$\ref{item:ic2a} is immediate from the adjunction defining $\Pi_B$.
  And certainly~\ref{item:ic3}$\Rightarrow$\ref{item:ic2}, so it will suffice to show~\ref{item:ic1}$\Rightarrow$\ref{item:ic3}.
  However, if $p$ is an acyclic fibration, then both projections $E\times_B E \to E$ are weak equivalences.
  Thus, by the 2-out-of-3 property, so is the diagonal $E\to E\times_B E$.
  Again by the 2-out-of-3 property, therefore, $P_B E \fib E\times_B E$ is an acyclic fibration.
  But $\Pi_{\pi_2}:\sM/(E\times_B E) \to \sM/E$ preserves acyclic fibrations, so $\iscontr_B(E) \fib B$ is the composite of two acyclic fibrations.
\end{proof}

The equivalence with \autoref{thm:iscontr}\ref{item:ic3} means that, informally, the ``points'' of $\iscontr_B(E)$ are ``no more than'' assertions that the corresponding fiber of $p$ is contractible.
In other words, a fibration can ``be acyclic'' in at most one way, up to homotopy.
We also have the following stronger property alluded to above:

\begin{lem}\label{thm:iscontr2}
  For a fibration $p:E\fib B$ and a map $g:A\to B$, the following are equivalent:
  \begin{enumerate}
  \item The pullback $g^*(E) \to A$ is an acyclic fibration.\label{item:ic2-1}
  \item $g:A\to B$ lifts to $\iscontr_B(E)$.\label{item:ic2-2}
  \end{enumerate}
\end{lem}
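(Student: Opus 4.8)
The plan is to deduce this from \autoref{thm:iscontr} by base change. The key observation to establish first is that the construction $E\mapsto\iscontr_B(E)$ is stable under pullback: for any $g\colon A\to B$ and any fibration $p\colon E\fib B$, there is a natural isomorphism $g^*\iscontr_B(E)\cong\iscontr_A(g^*E)$ in $\sM/A$.

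Granting this, the lemma is formal. A lift of $g$ to $\iscontr_B(E)$ over $B$ is the same as a section of the pulled-back fibration $g^*\iscontr_B(E)\fib A$; transporting along the isomorphism above, this is the same as a section of $\iscontr_A(g^*E)\fib A$; and by the equivalence \ref{item:ic1}$\Leftrightarrow$\ref{item:ic2} of \autoref{thm:iscontr} applied to the fibration $g^*E\fib A$, such a section exists exactly when $g^*E\to A$ is an acyclic fibration. We may invoke \autoref{thm:iscontr} here because $\sM/A$ is again a locally cartesian closed, right proper, simplicial model category whose cofibrations are the monomorphisms; equivalently, one simply reads \autoref{thm:iscontr} with base $A$ and total space $g^*E$.

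To prove the base-change isomorphism I would unwind $\iscontr_B(E)=\Sigma_p\Pi_{\pi_2}(P_B E)$ and handle the three operations separately. The functors $\Sigma_p$ and $\Pi_{\pi_2}$ commute with pullback along $g$ by the Beck--Chevalley condition, which holds for \emph{every} pullback square in a locally cartesian closed category; the squares one needs are the defining pullback square of $g^*E$ and the induced pullback square comparing the second projections $(g^*E)\times_A(g^*E)\to g^*E$ and $E\times_B E\to E$. The remaining ingredient, $P_B E$, is the cotensor of $E$ by $\Delta^1$ in $\sM/B$, and the pullback functor $g^*\colon\sM/B\to\sM/A$ is a simplicial functor with left adjoint $\Sigma_g$, hence preserves cotensors; so it carries $P_B E$ to the cotensor of $g^*E$ by $\Delta^1$ in $\sM/A$, which is $P_A(g^*E)$. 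Splicing these identifications together produces the isomorphism $g^*\iscontr_B(E)\cong\iscontr_A(g^*E)$.

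The only step requiring any care is this last splicing: one must check that the three base-change isomorphisms (for $\Sigma_p$, for $\Pi_{\pi_2}$, and for the fiberwise path object) are mutually compatible, so that their composite is indeed a well-defined morphism $g^*\iscontr_B(E)\to\iscontr_A(g^*E)$ in $\sM/A$. This is routine coherence bookkeeping for composites of Beck--Chevalley transformations together with pseudonaturality of the cotensor, and I anticipate no genuine obstacle; everything else is formal.
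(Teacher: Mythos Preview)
Your proposal is correct and follows essentially the same route as the paper: reduce a lift of $g$ to a section of $g^*\iscontr_B(E)$, invoke pullback-stability of the construction (Beck--Chevalley for $\Sigma$ and $\Pi$, preservation of the cotensor $P_B E$) to identify this with $\iscontr_A(g^*E)$, and then apply \autoref{thm:iscontr}. The paper's proof is terser---it simply asserts that all the operations involved are stable under pullback---whereas you spell out each ingredient and flag the coherence bookkeeping, but the argument is the same.
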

\begin{proof}
  First note that~\ref{item:ic2-2} is equivalent to saying that $g^* \iscontr_B(E)$ has a section.
  However, the construction of $\iscontr_B(E)$ involves only operations that are stable (up to isomorphism) under pullback along $g$ (in the case of $\Sigma$ and $\Pi$ this is sometimes called the ``Beck--Chevalley property'').
  Thus, $g^* \iscontr_B(E) \cong \iscontr_{A}(g^* E)$, so the equivalence follows from \autoref{thm:iscontr}.
\end{proof}

Now, in order to internalize the statement ``$f$ is an equivalence'', we will use the fact that $f$ is an equivalence if and only if the fibration-replacement of $f$ being an acyclic fibration.
Internally, this means we will assert that every \emph{homotopy} fiber of $f$ is contractible.

Specifically, given a map $f:E_1 \to E_2$, we define $P f$ as the pullback shown below.
\[\vcenter{\xymatrix{
    E_1 \ar[r]^f \ar@{.>}[dr]^r \ar[ddr]_{1_{E_1}\times f} & E_2 \ar[dr]\\
    &P f\ar[r]\ar@{->>}[d] \pullbackcorner &
    P E_2\ar@{->>}[d]\\
    &E_1\times E_2\ar[r]_{f\times 1_{E_2}}&
    E_2\times E_2.
  }}\]
This is a version of the classical mapping path fibration.
It has a universal property saying that to give a map $A\to P f$ is the same as to give a map $x:A\to E_1$, a map $y:A\to E_2$, and a simplicial homotopy $f x \sim y$.
The induced map $r:E_1 \to P f$ corresponds to $x=1_{E_1}$, $y = f$, and the constant homotopy.

We denote the two composites $P_B f \fib E_1\times E_2 \to E_1$ and $P_B f\fib E_1\times E_2 \to E_2$ by $q$ and $p$ respectively.
Interpreted representably, they remember only the maps $x$ and $y$ respectively.
By construction, we have $q r = 1_{E_1}$ and $p r = f$.

The composite $r q : P_B f \to P_B f$ acts representably by taking $x$, $y$, and a homotopy $H:f x \sim y$ to the triple consisting of $x$, $f x$, and the constant homotopy.
This map is homotopic to the identity, so $r$ admits a deformation retraction, hence is a weak equivalence.
(In fact, since $r$ admits the retraction $q$, it is monic and hence an acyclic cofibration; this is proven in~\cite{shulman:invdia} in a bit more generality, by mimicking the type-theoretic proof in~\cite{gg:idtypewfs}.)

In conclusion, $f = p r$ factors $f$ as a weak equivalence followed by a fibration.
By the 2-out-of-3 property, therefore, $f$ is a weak equivalence if and only if $p$ is an acyclic fibration, and therefore if and only if $\iscontr_{E_2}(P f) \fib E_2$ has a section (in which case it is also acyclic, by \autoref{thm:iscontr}).
Thus, if we define
\[ \isequiv(f) \coloneqq \Pi_{E_2} \iscontr_{E_2}(P f). \]
then there is a map $1\to \isequiv (f)$ if and only if $f$ is a weak equivalence.

In fact, we need a more general version of this construction that works with a fiberwise map between fibrations.
For two fibrations $p_1:E_1 \fib B$ and $p_2:E_2\fib B$ and a map $f:E_1 \to E_2$ a map over $B$, we define $P_B f$ by the analogous pullback:
\[\vcenter{\xymatrix{
    E_1 \ar[r]^f \ar@{.>}[dr]^r \ar[ddr]_{1_{E_1}\times f} & E_2 \ar[dr]\\
    &P_B f\ar[r]\ar@{->>}[d] \pullbackcorner &
    P_B E_2\ar@{->>}[d]\\
    &E_1\times_B E_2\ar[r]_{f\times 1_{E_2}}&
    E_2\times_B E_2.
  }}\]
The same arguments applied in $\sM/B$ show that $f = p r$, where $r:E_1 \to P_B f$ and $p:P_B f \to E_2$ are a weak equivalence and a fibration respectively, both over $B$.
We now define
\[ \isequiv_B(f) \coloneqq \Pi_{p_2} \iscontr_{E_2}(P_B f), \]
where $p_2 : E_2 \fib B$ is the given fibration.
Note that this is an object of $\sM/B$.

\begin{lem}\label{thm:isequiv}
  For a map $f$ between fibrations $p_1:E_1 \fib B$ and $p_2:E_2\fib B$, the following are equivalent.
  \begin{enumerate}
  \item $f$ is a weak equivalence.\label{item:ie1}
  \item $\isequiv_B(f)\fib B$ has a section.\label{item:ie2}
  \item There is a map $1 \to \Pi_B \isequiv_B(f)$.\label{item:ie2a}
  \item $\isequiv_B(f)\fib B$ is an acyclic fibration.\label{item:ie3}
  \end{enumerate}
\end{lem}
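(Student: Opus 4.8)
The plan is to reduce everything to \autoref{thm:iscontr} applied to the mapping-path fibration $p\colon P_B f \fib E_2$, using the adjunctions relating the slices $\sM/B$, $\sM/E_2$, and $\sM$, together with the preservation properties recorded at the start of the section. Recall that $f$ has been factored as $f = p r$ with $r\colon E_1 \to P_B f$ a weak equivalence over $B$ and $p\colon P_B f \fib E_2$ a fibration over $B$; since a morphism of $\sM/B$ is a fibration (respectively a weak equivalence, an acyclic fibration) exactly when its underlying morphism of $\sM$ is, both of these statements hold in $\sM$ as well. By the 2-out-of-3 property, $f$ is a weak equivalence if and only if $p$ is, and since $p$ is a fibration this happens if and only if $p$ is an acyclic fibration. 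Applying \autoref{thm:iscontr} to the fibration $p\colon P_B f \fib E_2$ (legitimate, since that lemma asks only that $p$ be a fibration), we obtain that $f$ is a weak equivalence if and only if $\iscontr_{E_2}(P_B f)\fib E_2$ has a section, and that in this case $\iscontr_{E_2}(P_B f)\fib E_2$ is in fact an acyclic fibration.

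It remains only to transport ``having a section over $E_2$'' through $\Pi_{p_2}$. The pullback functor $p_2^*\colon \sM/B \to \sM/E_2$ sends the terminal object $\mathrm{id}_B$ to the terminal object $\mathrm{id}_{E_2}$, so its right adjoint $\Pi_{p_2}$ sends $\mathrm{id}_{E_2}$ back to $\mathrm{id}_B$. Hence by adjunction a section of $\isequiv_B(f) = \Pi_{p_2}\iscontr_{E_2}(P_B f)\fib B$ --- that is, a map $\mathrm{id}_B \to \isequiv_B(f)$ in $\sM/B$ --- corresponds precisely to a map $\mathrm{id}_{E_2} \to \iscontr_{E_2}(P_B f)$ in $\sM/E_2$, i.e.\ to a section of $\iscontr_{E_2}(P_B f)\fib E_2$. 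Combined with the previous paragraph this establishes \ref{item:ie1}$\Leftrightarrow$\ref{item:ie2}, while \ref{item:ie2}$\Leftrightarrow$\ref{item:ie2a} is immediate from the adjunction defining $\Pi_B$, exactly as in \autoref{thm:iscontr}.

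Finally \ref{item:ie3}$\Rightarrow$\ref{item:ie2} is trivial, so it suffices to prove \ref{item:ie1}$\Rightarrow$\ref{item:ie3}. When $f$ is a weak equivalence, the first paragraph together with \autoref{thm:iscontr} shows that $\iscontr_{E_2}(P_B f)\fib E_2$ is an acyclic fibration, equivalently that the morphism $\iscontr_{E_2}(P_B f) \to \mathrm{id}_{E_2}$ of $\sM/E_2$ is an acyclic fibration. Since $p_2^*$ preserves monomorphisms, $\Pi_{p_2}$ preserves acyclic fibrations; applying it yields an acyclic fibration $\isequiv_B(f) = \Pi_{p_2}\iscontr_{E_2}(P_B f) \to \Pi_{p_2}(\mathrm{id}_{E_2}) = \mathrm{id}_B$ in $\sM/B$, i.e.\ $\isequiv_B(f)\fib B$ is an acyclic fibration. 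The only step requiring any real care is the bookkeeping with the terminal objects of the various slice categories, which is what makes the translation between the ``section'' statements over $B$ and over $E_2$ go through; everything else is a direct appeal to \autoref{thm:iscontr} or to the basic facts about $\Sigma$ and $\Pi$ from the start of the section.
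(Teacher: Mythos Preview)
Your proof is correct and follows essentially the same approach as the paper's: both reduce \ref{item:ie1}$\Leftrightarrow$\ref{item:ie2} to \autoref{thm:iscontr} via the factorization $f=pr$ and the $p_2^* \dashv \Pi_{p_2}$ adjunction, and both obtain \ref{item:ie1}$\Rightarrow$\ref{item:ie3} by noting that $\Pi_{p_2}$ preserves acyclic fibrations. You have simply spelled out more explicitly the adjunction bookkeeping (in particular, that $p_2^*(\mathrm{id}_B)=\mathrm{id}_{E_2}$ so that sections over $B$ correspond to sections over $E_2$) which the paper compresses into the phrase ``our previous argument, applied in $\sM/B$''.
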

\begin{proof}
  Our previous argument, applied in $\sM/B$, shows~\ref{item:ie1}$\Leftrightarrow$\ref{item:ie2}, and~\ref{item:ie2}$\Leftrightarrow$\ref{item:ie2a} is immediate by adjunction.
  And certainly~\ref{item:ie3}$\Rightarrow$\ref{item:ie2}, so it remains to show the converse.
  But by \autoref{thm:iscontr}, if $\iscontr_{E_2}(P_B f) \fib E_2$ has a section, then it is an acyclic fibration, and $\Pi_{p_2}$ preserves acyclic fibrations.
\end{proof}

\begin{lem}\label{thm:isequiv2}
  For a map $f$ between fibrations $p_1:E_1 \fib B$ and $p_2:E_2\fib B$, and a map $g:A\to B$, the following are equivalent.
  \begin{enumerate}
  \item The induced map $g^*E_1 \to g^* E_2$ is a weak equivalence.\label{item:ie2-1}
  \item $g$ lifts to $\isequiv_B(f)$.\label{item:ie2-2}
  \end{enumerate}
\end{lem}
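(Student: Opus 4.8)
The plan is to reduce \autoref{thm:isequiv2} to \autoref{thm:isequiv} in exactly the way \autoref{thm:iscontr2} was reduced to \autoref{thm:iscontr}. First I would note that condition~(ii) — that $g\colon A\to B$ lifts to $\isequiv_B(f)\fib B$ — is, by the universal property of pullback, equivalent to saying that $g^*\isequiv_B(f)\to A$ admits a section over $A$.

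The heart of the argument is then to exhibit a natural isomorphism $g^*\isequiv_B(f)\cong \isequiv_A(g^*f)$, where $g^*f\colon g^*E_1\to g^*E_2$ is the induced fiberwise map between the pulled-back fibrations $g^*p_1$ and $g^*p_2$ over $A$. This is a matter of checking that each operation occurring in the definition $\isequiv_B(f)=\Pi_{p_2}\iscontr_{E_2}(P_B f)$ is stable under base change along $g$. The pullbacks defining $P_B f$ commute with $g^*$ on the nose; the relative path object $P_B E_2$ is pullback-stable because, although the plain cotensor $(-)^{\Delta^1}$ in $\sM$ need not commute with $g^*$, the cotensor in the slice can be computed as $E_2^{\Delta^1}\times_{B^{\Delta^1}}B$, and one checks directly that $g^*\bigl(E_2^{\Delta^1}\times_{B^{\Delta^1}}B\bigr)\cong (g^*E_2)^{\Delta^1}\times_{A^{\Delta^1}}A$ (both sides are $A\times_{B^{\Delta^1}}E_2^{\Delta^1}$); the functor $\iscontr$ was already observed to be pullback-stable in the proof of \autoref{thm:iscontr2}; and the $\Sigma$'s and $\Pi$'s in play satisfy the Beck–Chevalley property. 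Composing these canonical isomorphisms through the definition yields $g^*\isequiv_B(f)\cong \isequiv_A(g^*f)$ over $A$.

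Finally I would combine the two steps: $g^*\isequiv_B(f)\to A$ has a section if and only if $\isequiv_A(g^*f)\fib A$ has a section, and by \autoref{thm:isequiv} (applied with base object $A$ in place of $B$) this holds if and only if $g^*f\colon g^*E_1\to g^*E_2$ is a weak equivalence, which is precisely condition~(i).

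Everything here is formal once the base-change compatibilities are in place, so I expect the only genuine obstacle to be the verification that $P_B(-)$ is pullback-stable — the one ingredient for which one cannot simply invoke Beck–Chevalley — and I would spell that computation out (or alternatively just remark that the chosen model of $P_B E_2$ is built from finite limits over $B$, all of which $g^*$ preserves).
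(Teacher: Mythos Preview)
Your proposal is correct and follows essentially the same approach as the paper, which simply says ``Just like the proof of \autoref{thm:iscontr2}.'' In fact you supply more detail than the paper does, particularly in spelling out the pullback-stability of the relative path object $P_B(-)$, which the paper leaves implicit.
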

\begin{proof}
  Just like the proof of \autoref{thm:iscontr2}.
\end{proof}

Our final goal now is to construct the universal \emph{space of} equivalences between two objects $E_1$ and $E_2$, or more generally between two fibrations $p_1 : E_1 \fib B$ and $p_2 : E_2\fib B$.
A logical place to start is with the universal space of \emph{functions}, namely the exponential $\fun_B(E_1,E_2)$ in $\sM/B$, which exists since $\sM$ is locally cartesian closed.
This object comes with a universal morphism $\fun_B(E_1,E_2) \times_B E_1 \to E_2$ over $B$.
By the universal property of a pullback, this universal morphism equivalently induces a morphism
\[ h:\fun_B(E_1,E_2) \times_B E_1 \too \fun_B(E_1,E_2) \times_B E_2 \]
over $\fun_B(E_1,E_2)$, which we will denote by $h$ as shown.
We think of $h$ as the ``universal family of functions $E_1 \to E_2$''.
If the objects of $\sM$ had ``points'', then the fiber of $h$ over a point $f\in \fun_B(E_1,E_2)$ (which itself would live over some point $b\in B$) would be the map $f : p_1^{-1}(b) \to p_2^{-1}(b)$ itself.
Formally, what this means is that for any object $A$, the bijection between lifts of $g:A\to B$ to $\fun_B(E_1,E_2)$ and morphisms $g^*E_1 \to g^* E_2$ over $A$ is implemented by pulling back $h$.

We can now construct $\isequiv_{\fun_B(E_1,E_2)}(h)$, which is a fibration over $\fun_B(E_1,E_2)$.
Its fiber over a ``point'' $f\in \fun_B(E_1,E_2)$ should be contractible if $f : p_1^{-1}(b) \to p_2^{-1}(b)$ is an equivalence, and empty otherwise.
Therefore, if we define
\[ \equiv_B(E_1,E_2) \coloneqq \Sigma_{\fun_B(E_1,E_2)} \isequiv_{\fun_B(E_1,E_2)}(h), \]
then $\equiv_B(E_1,E_2)$ (an object of $\sM/B$) will be the universal family of equivalences from $E_1$ to $E_2$ over $B$.
More precisely, to give a map $A\to \equiv_B(E_1,E_2)$ over some given map $g:A\to B$ is to give a map $A\to \fun_B(E_1,E_2)$ which lifts to $\isequiv_{\fun_B(E_1,E_2)}(h)$.
But by the universal property of $\fun_B(E_1,E_2)$ combined with \autoref{thm:isequiv2}, this is equivalent to giving a map $g^*E_1 \to g^*E_2$ which is an equivalence.

Finally, given a putative universe $p:\Util\to U$, we define the universal space of equivalences as:
\[\nEq(\Util) \coloneqq \equiv_{U\times U}(\pi_1^*\Util,\pi_2^*\Util).\]
This is an object of $\sM/(U\times U)$, with the property that lifting a map $(g_1,g_2):A\to U\times U$ to $\nEq(\Util)$ is equivalent to giving an equivalence $g_1^* \Util \to g_2 \Util$.
In fact, combining \autoref{thm:isequiv2} with the pullback-stability of local exponentials, we have
\[(g_1,g_2)^* \nEq(\Util) \cong \equiv_A(g_1^*\Util, g_2^*\Util) \]
Now we can prove \autoref{thm:eqlift}.

\begin{replem}{thm:eqlift}
  Let $\Ebar_1\fib B$ and $\Ebar_2\fib B$ be fibrations classified by maps $e_1,e_2:B\toto U$, let $\wbar\colon \Ebar_1\to \Ebar_2$ be a weak equivalence over $B$, let $i\colon A\cof B$ be a cofibration, and suppose we have a lift of $A \xto{i} B \to U\times U$ to $\nEq(\Util)$ which classifies $i^*(\wbar)$.
  Then this lift can be extended to $B$ so as to classify $\wbar$.
\end{replem}
\begin{proof}
  By the above remarks, we have $(e_1,e_2)^* \nEq(\Util) \cong \equiv_B(\Ebar_1,\Ebar_2)$, and so our given lift is equivalently a lift of $i$ to $\equiv_B(\Ebar_1,\Ebar_2)$.
  Let $k:B \to \fun_B(\Ebar_1,\Ebar_2)$ be the classifying map of $\wbar$; then we have the following commutative square:
  \begin{equation}
    \vcenter{\xymatrix{
        A\ar[r]\ar@{>->}[d]_i &
        \equiv_B(\Ebar_1,\Ebar_2)\ar@{->>}[d]\\
        B\ar[r]_-k &
        \fun_B(\Ebar_1,\Ebar_2)
      }}
  \end{equation}
  and hence also, invoking pullback-stability and the definition of $\equiv_B$, the following commutative square:
  \begin{equation}
    \vcenter{\xymatrix{
        A\ar[r]\ar@{>->}[d]_i &
        \isequiv_B(\wbar)\ar@{->>}[d]\\
        B\ar@{=}[r] \ar@{.>}[ur] &
        B.
      }}
  \end{equation}
  But since $\wbar$ is a weak equivalence, by \autoref{thm:isequiv} the right-hand fibration in this square is acyclic.
  Since $i$ is a cofibration, there exists a lift as shown, and tracing backwards this gives us our desired lift $B\to \nEq(\Util)$.
\end{proof}

\section{Modeling type theory}
\label{sec:modeling-type-theory}

In \S\ref{sec:on-univalence} we described a general plan for obtaining a universe object and showing that it is fibrant and univalent.
We remarked that for the interpretation of type theory, we need the small fibrations classified by our universe to be closed under the category-theoretic operations corresponding to all the basic type-forming operations: dependent sums, dependent products, and identity types.
We now show that this is the case for the universes we constructed in \S\ref{sec:on-univalence}.

The easiest case is dependent sums, which are modeled by composition of fibrations.
The composite of fibrations is always a fibration; for the composite of \ka-small fibrations to remain \ka-small we merely need \ka to be regular.

Dependent products are most directly modeled by right adjoints to pullback.
These exist in any locally cartesian closed category, but we require that the dependent product of a (\ka-small) fibration along a (\ka-small) fibration is again a (\ka-small) fibration.
The most natural way to ensure preservation of fibrations is via the adjoint condition that pullback along fibrations preserves acyclic cofibrations.
If the cofibrations are the monomorphisms, then they are stable under pullback, and if the model category is right proper, then weak equivalences are also stable under pullback along fibrations; so these two conditions together suffice.

For dependent product to preserve \ka-smallness in a presheaf category $\bSet^{\cC\op}$, we need \ka to be a strong limit cardinal and larger than $|\cC|$.
Thus, in conjunction with dependent sums, we need \ka to be inaccessible and larger than $|\cC|$.

Finally, the central insight of homotopy type theory is that identity types are modeled by path objects.
That is, for a \ka-small fibration $B\fib A$, we factor the diagonal $B\to B\times_A B$ into an acyclic cofibration followed by a fibration, $B \to P_A B \to B\times_A B$.
Since $B$ and $B\times_A B$ are fibrant in the slice model category over $A$, in a simplicial model category we can let $P_A B$ be the simplicial cotensor by $\Delta^1$ in this slice category.
In a category $\sSet^{\cC\op}$ of presheaves of simplicial sets, this preserves \ka-smallness as long as \ka is uncountable and larger than $|\cC|$.

There is also the issue of \emph{coherence} for all these structures, but fortunately this is taken care of by the coherence theorem of~\cite{lw:localuniv} or~\cite{awodey:natmodels}.
Thus, we can say:

\begin{thm}\label{thm:modeltt}
  If $\sSet^{\cC\op}$ has a right proper, cofibrantly generated simplicial model structure whose cofibrations are the monomorphisms, then it models type theory with dependent sums, dependent products, and intensional identity types.

  Moreover, if \ka is inaccessible and larger than $|\cC|$, and the codomains of the generating acyclic cofibrations are representable, then $\sSet^{\cC\op}$ contains a universe object classifying \ka-small fibrations and satisfying~\ref{item:u2p}.
  If this universe is fibrant (such as if~\ref{item:u3p} holds), it represents a univalent universe in the internal type theory.
\end{thm}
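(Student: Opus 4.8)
The plan is to collect the facts already established and then quote the known coherence theorems; no new homotopy-theoretic work is needed. For the first claim, I would begin by recalling that any presheaf category $\sSet^{\cC\op}$ is locally cartesian closed (indeed it is a Grothendieck topos, being presheaves of sets on $\Delta\times\cC$), so together with the stated hypotheses it carries everything in the discussion above: dependent sums modeled by composition of fibrations; dependent products by the right adjoints $\Pi_f$ to pullback, which preserve fibrations because pullback along a fibration preserves acyclic cofibrations (monomorphisms are pullback-stable, and right properness makes weak equivalences pullback-stable along fibrations); and intensional identity types by the simplicial path object $P_A B$, the cotensor of $B\fib A$ by $\Delta^1$ formed in the slice $\sSet^{\cC\op}/A$, which factors the diagonal $B\to B\times_A B$ as an acyclic cofibration followed by a fibration. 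Granting this ``weak'' structure, the coherence theorem of~\cite{lw:localuniv} (see also~\cite{awodey:natmodels}) strictifies it into an honest model of type theory with $\Sigma$-types, $\Pi$-types, and intensional identity types.

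For the universe, I would apply \autoref{thm:wellordered}, regarding $\sSet^{\cC\op}$ as a presheaf category: this produces a \ka-small fibration $p\colon\Util\fib U$ satisfying~\ref{item:u2p}, and since every object here is cofibrant, taking $A=\emptyset$ in~\ref{item:u2p} yields~\ref{item:u2}, so that every \ka-small fibration is a strict pullback of $p$. It then remains to check that the \ka-small fibrations are closed under the type-forming operations, which is exactly the content of the discussion preceding this theorem: composites of \ka-small fibrations are \ka-small because \ka is regular; dependent products of \ka-small fibrations along \ka-small fibrations are \ka-small because \ka is a strong limit larger than $|\cC|$; and the simplicial path objects $P_A B$ preserve \ka-smallness because \ka is uncountable and larger than $|\cC|$ --- all of which hold since \ka is inaccessible and $\ka>|\cC|$ (so that also $\ka>|\Delta\times\cC|$). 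That these operations on \ka-small fibrations can be chosen strictly compatible with the decoding map $p$ and with one another is handled as in~\cite{lw:localuniv,klv:ssetmodel}, making $U$ a strict universe in the internal type theory.

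Finally, for univalence, I would assume that $U$ is fibrant --- for instance because~\ref{item:u3p} holds, which by the argument of \S\ref{sec:on-univalence} gives~\ref{item:u3}. By \autoref{thm:4p} the \ka-small fibrations satisfy~\ref{item:u4p} (its hypotheses --- simplicial model category, cofibrations the monomorphisms, $\ka>|\cC|$ --- again hold). Feeding~\ref{item:u4p},~\ref{item:u2p}, and \autoref{thm:eqlift} into the reduction described in \S\ref{sec:on-univalence} produces a lift in every square witnessing that the projection $\nEq(\Util)\to U$ has the right lifting property against cofibrations; being a fibration, it is therefore an acyclic fibration, so by the 2-out-of-3 property the canonical map $P U\to\nEq(\Util)$ is a weak equivalence --- the categorical form of the univalence axiom. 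Since the strictification of~\cite{lw:localuniv} respects weak equivalences between fibrations, this carries over to the strict model, and $U$ represents a univalent universe.

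The substantive mathematics has all been carried out already, in \autoref{thm:wellordered}, \autoref{thm:4p}, and \autoref{thm:eqlift}; what remains is bookkeeping together with elementary cardinal arithmetic. The one point that deserves care --- and the part I would expect to be the main obstacle to write out in full --- is the coherence step: one must know that the strictification carries not merely the type formers but also a universe that is strictly closed under them and satisfies univalence. This is not reproved here but is exactly what~\cite{lw:localuniv} and the universe arguments of~\cite{klv:ssetmodel} supply, so the remaining task is only to verify that the hypotheses in play match those of the cited results.
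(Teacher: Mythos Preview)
Your proposal is correct and follows essentially the same approach as the paper: the theorem is a summary statement whose proof is the discussion immediately preceding it in \S\ref{sec:modeling-type-theory}, together with the results of \S\S\ref{sec:on-univalence}--\ref{sec:equiv} (specifically \autoref{thm:wellordered}, \autoref{thm:4p}, and \autoref{thm:eqlift}) assembled exactly as you describe. Your explicit remark that $\ka>|\Delta\times\cC|$ follows from $\ka>|\cC|$ and inaccessibility, and your care about what the coherence theorem must carry across, are points the paper leaves implicit but are worth noting.
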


Of course, with multiple inaccessibles larger than $|\cC|$, we can find multiple universe objects of this sort, each contained in the next.
More precisely, if $\ka<\la$, then every \ka-small fibration is also \la-small, so we can find a pullback square
\begin{equation}
  \vcenter{\xymatrix@-.5pc{
      \Util\ar[r]\ar[d]_p &
      \Util'\ar[d]^{p'}\\
      U\ar[r] &
      U'
    }}
\end{equation}
where $p$ and $p'$ classify \ka-small and \la-small fibrations respectively, along with a classifying map $1\to U'$ for $U$ itself.

In fact, every proof of \autoref{thm:wellordered} gives us a little more: there is a canonical choice of such a pullback square in which $U \to U'$ is a monomorphism.
In the first proof, the inclusion $U \into U'$ is obvious.
In the second proof, we simply choose $\bSet_\la$ so that it contains $\bSet_\ka$.
And in the third proof, we can inductively construct monomorphisms $U_\alpha \into U'_\alpha$ which are preserved by all the colimits, as long as we choose the sets of $(i,f,p)$s for $U'$ to contain those for $U$.

These canonical inclusions are important, because in order to model a cumulative hierarchy of universes in type theory, we need to ensure furthermore that the inclusions respect the ``universe structure''.
To explain what this means, suppose $\Util\fib U$ is a universe.
Then the local exponential
\begin{equation}
  U^{(1)} \coloneqq (U\times U \to U)^{(\Util\to U)}\label{eq:udnt}
\end{equation}
is the base of a universal pair of composable small fibrations.
If the composite of these fibrations is again small, it is classified by some map $\Sigma:U^{(1)} \to U$, and in order to model a type-theoretic universe by $U$ we must \emph{choose} such a map.

Similarly, if the dependent product of the universal composable pair of small fibrations is small, we can choose for it a classifying map $\Pi:U^{(1)} \to U$.
And for identity types, we consider $\Util\times_U \Util$, which is the base of a universal ``type with two sections''.
We have a small fibration $P_U \Util \fib \Util\times_U \Util$, where $P_U \Util$ is the path object of $\Util$ in $\sSet^{\cC\op}/U$, and we can choose for it a classifying map $\mathrm{Id}:\Util\times_U \Util \to U$.

The requirement for a cumulative hierarchy of universes is then that the inclusions $U \into U'$ respect this chosen structure.
In~\cite{shulman:invdia} I called such a $U\into U'$ a \emph{universe embedding}.
Fortunately, if our universes all satisfy property~\ref{item:u2p}, then any inclusion can be made into a universe embedding as follows.

Suppose, under the hypotheses of \autoref{thm:modeltt}, that
% we are in a Grothendieck topos that is a simplicial model category in which the cofibrations are the monomorphisms, that $U$ and $U'$ are universes satisfying~\ref{item:u2p}, and that
we have a monomorphism $i\colon U\cof U'$ between universes such that $i^*(\Util') \cong \Util$.
Then there is an induced monomorphism $U^{(1)}\cof (U')^{(1)}$, which pulls back the universal composable pair of $U'$-small fibrations to the analogous pair of $U$-small ones.
If in addition we have chosen some classifying map $U^{(1)}\to U$ for the universal composite of $U$-small fibrations, then composing it with $i$ we obtain another classifying map $U^{(1)}\to U'$ for the same fibration.
But now since $U'$ satisfies~\ref{item:u2p}, we can extend this to a compatible classifying map $(U')^{(1)} \to U'$ for the universal composite of $U'$-small fibrations.

Thus, given $\Sigma: U^{(1)} \to U$, it is always possible to choose $\Sigma':(U')^{(1)} \to U'$ such that $i$ commutes with $\Sigma$ and $\Sigma'$.
The same technique applies to dependent products, and also to path objects as long as we choose constructions of the universal path objects relative to $U$ and $U'$ which are compatible under $i^*$ (up to isomorphism); in our simplicial model category, we can again use the cotensor with $\Delta^1$.
We can furthermore induct up any sequence of universe inclusions
\[ U_0 \cof U_1 \cof U_2 \cof \cdots \]
to obtain a sequence of universe embeddings.
Thus we have:

\begin{thm}\label{thm:modeltt2}
  Under all the hypotheses of \autoref{thm:modeltt}, $\sSet^{\cC\op}$ contains as many nested universe objects satisfying~\ref{item:u2p} as there are inaccessible cardinals greater than $|\cC|$.
  If these universes are fibrant, they represent univalent universes in the internal type theory.
\end{thm}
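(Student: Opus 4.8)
The plan is to assemble the ingredients that are by now in place. First, for each inaccessible cardinal $\ka > |\cC|$, \autoref{thm:modeltt} produces a universe object $\Util_\ka \fib U_\ka$ classifying $\ka$-small fibrations and satisfying~\ref{item:u2p}, together with the closure of the $\ka$-small fibrations under dependent sums, dependent products, and identity types. I would then organize these into a single tower indexed by the inaccessibles in increasing order: whenever $\ka < \la$, every $\ka$-small fibration is $\la$-small, so one expects a pullback square exhibiting $\Util_\ka$ as the restriction of $\Util_\la$ along a monomorphism $U_\ka \cof U_\la$. To make this hold and cohere along the whole tower, I would return to whichever construction of \autoref{thm:wellordered} is in use and arrange the choices so that the construction for the larger cardinal literally contains that for the smaller: in the first proof the inclusion $U_\ka \into U_\la$ is immediate; in the second, take $\bSet_\la \supseteq \bSet_\ka$; in the third, choose the indexing sets of triples $(i,f,p)$ for $U_\la$ to contain those for $U_\ka$, so that the transfinite construction yields monomorphisms $U_\al \into U'_\al$ preserved by all the colimits. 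This gives, for each $\ka < \la$, a monomorphism $i_{\ka\la}\colon U_\ka \cof U_\la$ with $i_{\ka\la}^* \Util_\la \cong \Util_\ka$, compatibly with composition.

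Next I would promote each such bare inclusion to a \emph{universe embedding}, i.e.\ one respecting the chosen type-forming structure on the universes, exactly as in the discussion preceding the theorem. Working up the tower, suppose a classifying map $\Sigma\colon U_\ka^{(1)} \to U_\ka$ for the universal composable pair of $\ka$-small fibrations has been chosen. Then $i_{\ka\la}$ induces a monomorphism $U_\ka^{(1)} \cof U_\la^{(1)}$ pulling back the universal composable pair of $\la$-small fibrations to the $\ka$-small one; composing $\Sigma$ with $i_{\ka\la}$ classifies the same composite fibration over $U_\ka^{(1)}$, and since $U_\la$ satisfies~\ref{item:u2p} this extends along the monomorphism $U_\ka^{(1)} \cof U_\la^{(1)}$ to a classifying map $U_\la^{(1)} \to U_\la$ for the universal $\la$-small composite, compatible with the one downstairs. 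The identical argument handles dependent products, and --- using the cotensor with $\Delta^1$ in the slice categories, which is pullback-stable up to isomorphism --- also identity types. Iterating this along the tower of inclusions, and taking colimits at limit stages (where the relevant colimits of monomorphisms are preserved by the structure thanks to the extensivity, adhesivity, and exhaustivity properties used for \autoref{thm:wellordered}), produces a full tower of universe embeddings, one for each inaccessible cardinal greater than $|\cC|$.

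Finally, if these universes are fibrant then each $\Util_\ka \fib U_\ka$ satisfies every requirement of \autoref{thm:modeltt}: it is a fibrant universe classifying $\ka$-small fibrations, it satisfies~\ref{item:u2p}, and its small fibrations are closed under $\Sigma$-, $\Pi$-, and $\mathrm{Id}$-formation since $\ka$ is inaccessible and $> |\cC|$. Hence, invoking the coherence theorem of~\cite{lw:localuniv} (or~\cite{awodey:natmodels}) together with the reductions carried out in \S\ref{sec:on-univalence}, whereby fibrancy of the universe and the univalence axiom follow from~\ref{item:u2p} and \autoref{thm:eqlift}, each $U_\ka$ represents a univalent universe in the internal type theory; and the universe embeddings exhibit them as a cumulative hierarchy.

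I expect the main obstacle to be organizational rather than conceptual: one must ensure that the choices made --- of the small sets of representatives for isomorphism classes of small fibrations, of the structure maps $\Sigma$, $\Pi$, $\mathrm{Id}$, and of the monomorphisms between consecutive universes --- are mutually compatible all the way along a possibly long well-ordered chain of inaccessibles, including at limit stages, rather than merely pairwise. No genuinely new idea is required beyond those already deployed, but this coherence bookkeeping is the step that must be carried out with care.
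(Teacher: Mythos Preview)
Your proposal is correct and follows essentially the same approach as the paper, whose proof is precisely the discussion preceding the theorem statement: construct a universe for each inaccessible $\ka > |\cC|$ via \autoref{thm:modeltt}, observe that each proof of \autoref{thm:wellordered} yields canonical monomorphic inclusions $U_\ka \cof U_\la$ for $\ka < \la$, and promote these to universe embeddings by extending the structure maps $\Sigma$, $\Pi$, $\mathrm{Id}$ along these monomorphisms using~\ref{item:u2p}. Your treatment of limit stages is slightly more explicit than the paper's (which only displays an $\omega$-sequence $U_0 \cof U_1 \cof U_2 \cof \cdots$), and your remark that this bookkeeping is the only delicate point is apt.
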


Instead of the coherence theorem of~\cite{lw:localuniv}, we can also use the method of~\cite{klv:ssetmodel}, with the universes themselves providing coherence.
However, this requires either the use of an ``outer'' universe providing the coherence, so that there is one fewer universe in the type theory than there are universes in the model category, or an infinite hierarchy of universes.

\section{Elegant Reedy presheaves}
\label{sec:elegant}

Finally, we show that~\ref{item:u3p} holds in the Reedy model structure of simplicial presheaves on any elegant Reedy category.
This completes the proof that type theory with univalent universes can be interpreted in such model categories.

Recall that for any simplicial category $\cC$, there is an \emph{injective model structure} on the category $\sSet^{\cC\op}$ of simplicial presheaves which is cofibrantly generated, left and right proper, and simplicial, and its cofibrations are the monomorphisms.
Thus, it satisfies all the conditions of Theorems~\ref{thm:modeltt} and~\ref{thm:modeltt2} except for representability of the codomains of the generating acyclic cofibrations.
In fact, in general the generating acyclic cofibrations are the most mysterious part of the injective model structure.

However, there is a special class of categories $\cC$ for which the injective model structure can be described much more explicitly.
When $\cC$ is an \emph{elegant Reedy category} as in~\cite{br:reedy}, the injective model structure coincides with the \emph{Reedy model structure}, which is perhaps the most explicit sort of model structure that can be put on a category of simplicial presheaves.
We will show that in this case, the rest of the structure follows as well, so that $\sSet^{\cC\op}$ models type theory with univalence.

Recall from~\cite[Ch.~15]{hirschhorn:modelcats} or~\cite[Ch.~5]{hovey:modelcats} that \cC is a \emph{Reedy category} if the following hold.
\begin{itemize}[leftmargin=2em]
\item There is a well-founded relation $\prec$ on the objects of $\cC$.
\item There are two subcategories $\cC^+$ and $\cC^-$ containing all the objects of $\cC$.
\item Every morphism $\al$ of $\cC$ can be written uniquely as $\al^+\al^-$, where $\al^+$ lies in $\cC^+$ and $\al^-$ lies in $\cC^-$.
\item If $\al\colon c\to d$ lies in $\cC^+$ and is not an identity, then $c\prec d$.
\item If $\al\colon c\to d$ lies in $\cC^-$ and is not an identity, then $d\prec c$.
\end{itemize}
We say \cC is \emph{direct} if $\cC^-$ contains only identities, and \emph{inverse} if $\cC^+$ contains only identities.

For a presheaf $X\in \sSet^{\cC\op}$ on a Reedy category \cC and an object $c\in \cC$, the \emph{matching object} is defined by
\[ M_c X \coloneqq \operatorname{lim}_{\partial(\cC^+ \dn c)\op} \left(X|_{\partial(\cC^+\dn c)\op}\right) \]
where $\partial(\cC^+ \dn c)$ denotes the full subcategory of the over-category $(\cC^+ \dn c)$ which omits the identity arrow of $c$.
Similarly, the \emph{latching object} is defined by
\[ L_c X \coloneqq \colim_{\partial(c\dn \cC^-)\op} \left(X|_{\partial(c\dn \cC^-)\op}\right). \]
Then the category $\sSet^{\cC\op}$ has a model structure, called the \emph{Reedy model structure}, in which a morphism $f\colon A\to B$ is a fibration just when each map
\begin{equation}
  A_c \too B_c \times_{M_c B} M_c A\label{eq:rfibmap}
\end{equation}
is a fibration in \sSet, a cofibration just when each map
\begin{equation}
  A_c \amalg_{L_c A} L_c B \too B_c\label{eq:rcofibmap}
\end{equation}
is a cofibration in \sSet, and a weak equivalence just when it is a levelwise weak equivalence in \sSet.
This model structure is simplicial and left and right proper. %, and presents the \io-category $\ig^{\cC\op}$.
For a categorical perspective on this construction, see~\cite{rv:reedy}.

Note that if \cC is direct, then each $L_c A$ is initial, so the Reedy cofibrations are just the levelwise ones, and dually.
For future use, we record the following:

\begin{lem}\label{thm:latchcof}
  If $f:A\to B$ is a Reedy cofibration, then each map $L_c f\colon  L_c A \to L_c B$ is a cofibration, which is acyclic if $f$ is.
  Dually, if $f$ is a Reedy fibration, then each $M_c f\colon  M_c A \to M_c B$ is a fibration, which is acyclic if $f$ is.
\end{lem}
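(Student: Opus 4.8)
I would prove the latching-object statement (the matching-object one being dual via $\cC\op$). Fix an object $c\in\cC$, and recall that $L_c X$ is computed as a colimit over $\partial(c\dn\cC^-)\op$. The key observation is that this indexing category depends only on $\cC$, not on $X$, so $X\mapsto L_c X$ is a functor $\sSet^{\cC\op}\to\sSet$ preserving all colimits it is defined by; in particular $L_c(-)$ preserves pushouts, and applied to a morphism $f\colon A\to B$ it sends the latching square to a square. The difficulty is that $L_c f$ being a cofibration is \emph{not} simply the statement that each $A_d\to B_d$ (for $d\prec c$) is a cofibration — colimits of monomorphisms in presheaves need not be monomorphisms in general — so we cannot argue levelwise and must use the Reedy hypothesis on $f$.

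The standard route (see \cite[Ch.~15]{hirschhorn:modelcats} or \cite[Ch.~5]{hovey:modelcats}) is an induction on the well-founded relation $\prec$, carried out on the object $c$. For the inductive step one compares the relative latching map $A_d\amalg_{L_d A}L_d B\to B_d$ (which is a cofibration in $\sSet$ by hypothesis, using \eqref{eq:rcofibmap}) for $d$ ranging over the objects strictly below $c$, and one shows that $L_c f$ is built from these relative latching maps by iterated pushout and transfinite composition along the well-ordering of $\partial(c\dn\cC^-)$ refining $\prec$. Concretely: filter the diagram $X|_{\partial(c\dn\cC^-)\op}$ by the skeleta associated to the well-ordering; at each stage the colimit changes by a pushout whose left-hand leg is exactly a relative latching map $A_d\amalg_{L_d A}L_d B\to B_d$ tensored over nothing (or, more precisely, attached along $L_d A\to L_d B$ and $A_d$). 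Since cofibrations in $\sSet$ are closed under pushout and transfinite composition, $L_c f$ is a cofibration; and since \emph{acyclic} cofibrations in $\sSet$ are likewise closed under these operations, if each relative latching map is an acyclic cofibration — i.e.\ if $f$ is a Reedy acyclic cofibration — then so is $L_c f$. This is the same bookkeeping that underlies the very construction of the Reedy model structure, so one could also simply cite it.

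\textbf{Main obstacle.} The one place requiring care is setting up the filtration of the latching-object diagram so that the successive pushouts are genuinely indexed by the relative latching maps at objects $d\prec c$, and checking that the attaching maps match up — i.e.\ that $L_d A$ and $L_d B$ themselves appear correctly as sub-colimits of $L_c A$ and $L_c B$. This is where the axioms of a Reedy category (uniqueness of the factorization $\al=\al^+\al^-$, and that non-identity maps in $\cC^-$ strictly decrease $\prec$) are used: they guarantee that $\partial(d\dn\cC^-)$ embeds compatibly into $\partial(c\dn\cC^-)$ for each non-identity $d\to c$ in $\cC^-$, so the induction hypothesis applies to $L_d f$. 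Once that combinatorial skeleton is in place, the closure properties of (acyclic) cofibrations in $\sSet$ finish the argument immediately, and the dual statement for matching objects follows by applying what has been proved to $\cC\op$ (noting $M_c X = L_c(X)$ computed in $\sSet^{(\cC\op)\op}$, with fibrations and acyclic fibrations in $\sSet$ likewise closed under pullback, transfinite cocomposition, and retracts).
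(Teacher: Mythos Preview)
Your proposal is correct and matches one of the two arguments the paper offers, but not the primary one. The paper's main proof is shorter and more conceptual: it observes that $\partial(c\dn\cC^-)$ is an inverse category, so Reedy fibrations on $\partial(c\dn\cC^-)\op$ are levelwise, hence the constant-diagram functor $\sSet\to\sSet^{\partial(c\dn\cC^-)\op}$ is right Quillen and the colimit functor is left Quillen; it then remains only to check that restriction $\sSet^{\cC\op}\to\sSet^{\partial(c\dn\cC^-)\op}$ preserves Reedy cofibrations, which follows because $\partial(\gamma\dn\partial(c\dn\cC^-))\cong\partial(d\dn\cC^-)$ for $\gamma\colon c\to d$, so restriction preserves latching objects. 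The paper does, however, also mention your cell-complex argument as an alternative, citing \cite[Lemma~7.1]{rv:reedy}. Your route has the virtue of being self-contained and making the cell structure explicit (which is what one often needs in applications), at the cost of the combinatorial bookkeeping you flag; the paper's route packages that bookkeeping into the single observation that restriction preserves latching objects, and then lets the abstract Quillen machinery do the work.
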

\begin{proof}
  Since $\partial(c\dn \cC^-)$ is an inverse category, its Reedy fibrations are levelwise.
  In particular, the constant diagram functor $\sSet \to \sSet^{\partial(c\dn \cC^-)\op}$ is right Quillen, and so the colimit functor over $\partial(c\dn \cC^-)\op$ is left Quillen.
  Hence, it suffices to show that the restriction functor $\sSet^{\cC\op} \to \sSet^{\partial(c\dn \cC^-)\op}$ preserves Reedy cofibrations.
  But given $\gm\colon c\to d$ in $\partial(c\dn \cC^-)$, we have $\partial(\gm \dn \partial(c\dn \cC^-)) \cong \partial(d\dn \cC^-)$, so this restriction preserves latching objects.

  An alternative argument, showing that $L_c f$ is a cell complex whose cells are the maps~\eqref{eq:rcofibmap} at objects preceding $c$, can be found in the third paragraph of the proof of~\cite[Lemma 7.1]{rv:reedy}.
\end{proof}

\begin{rmk}\label{rmk:latchcof}
  \autoref{thm:latchcof} remains true even if $A$, $B$, and $f$ are only defined on the full subcategory of objects $d\in \cC$ with $d\prec c$.
  Thus, we can use it during the standard Reedy process of building up diagrams and maps inductively.
\end{rmk}

By~\cite[15.6.24]{hirschhorn:modelcats} or~\cite[7.7]{rv:reedy}, the Reedy model structure on $\sSet^{\cC\op}$ is cofibrantly generated; the generating Reedy acyclic cofibrations are the pushout products
\begin{equation}
  (\Lambda^n_k \otimes Y_c) \cup_{\Lambda^n_k \otimes L_c Y} (\Delta^n \otimes L_c Y) \too \Delta^n \otimes Y_c .\label{eq:reedy-gen}
\end{equation}
Here $Y\colon \cC \to \sSet^{\cC\op}$ is the Yoneda embedding, and $K\otimes X$ denotes the simplicial tensor (which in this case is the levelwise cartesian product).
Since the functors $\Delta^n \otimes Y_c$ are exactly the representable functors in $\sSet^{\cC\op}$ (when regarded as the presheaf category $\bSet^{\Delta\op\times \cC\op}$), we can apply \autoref{thm:wellordered} to obtain universes for small Reedy fibrations that satisfy~\ref{item:u2p}.

Note that for any regular cardinal \ka, if $f$ and $g$ are composable functions such that $g$ has \ka-small fibers, then $f$ has \ka-small fibers if and only if $g f$ does.
Moreover, \ka-small morphisms are closed under limits of size $<\ka$.
Thus, if $\ka > |\cC|$, a Reedy fibration is \ka-small in the sense of \S\ref{sec:on-univalence} if and only if each map~\eqref{eq:rfibmap} is a \ka-small fibration in \sSet.

We henceforth assume \cC to be an \emph{elegant Reedy category}.
This is a combinatorial condition due to~\cite{br:reedy} which ensures that the Reedy cofibrations in $\sSet^{\cC\op}$ are exactly the (levelwise) monomorphisms, i.e.\ that the Reedy model structure coincides with the injective one.
Examples of elegant Reedy categories include direct categories, the simplex category $\Delta$, the $n$-fold simplex category $\Delta^n$, and Joyal's categories $\Theta_n$.
%(It is claimed in the first version of~\cite{br:reedy} that this property characterizes elegant Reedy categories, but the proof is apparently flawed.)
Thus, to obtain a model of type theory with univalent universes in $\sSet^{\cC\op}$, it remains only to show that the universes are fibrant.

\begin{lem}\label{thm:reedy-3p}
  If $\cC$ is an elegant Reedy category, then the Reedy model structure on $\sSet^{\cC\op}$ satisfies~\ref{item:u3p}.
\end{lem}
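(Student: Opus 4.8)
The statement unwinds to the following: given an acyclic Reedy cofibration $i\colon A\cof B$ and a $\ka$-small Reedy fibration $P\fib A$, produce a $\ka$-small Reedy fibration $Q\fib B$ together with an isomorphism $i^*Q\cong P$ over $A$. The plan is to build $Q$, the map $Q\to B$, and the isomorphism $\phi\colon i^*Q\cong P$ together by the usual Reedy recursion, i.e.\ by well-founded induction over $\prec$. Elegance enters in two elementary ways used throughout: $i$ is then a levelwise monomorphism, and (Reedy weak equivalences being levelwise) a levelwise weak equivalence, hence a levelwise acyclic cofibration of simplicial sets; and every object of $\sSet^{\cC\op}$ is Reedy cofibrant, so every latching map is a monomorphism.

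At an object $c$, assuming the data is built on $\{d : d\prec c\}$, I would first dispose of the matching side. Since $M_c(-)$ is a limit it commutes with the levelwise pullback $i^*$, so the inductive hypothesis gives $M_cP\cong M_cA\times_{M_cB}M_cQ$; writing $\overline{M}_cQ = B_c\times_{M_cB}M_cQ$ and $\overline{M}_cP = A_c\times_{M_cA}M_cP$, this exhibits $\overline{M}_cP$ as $A_c\times_{B_c}\overline{M}_cQ$, so the relative matching map $j_c\colon\overline{M}_cP\to\overline{M}_cQ$ is the pullback of $i_c\colon A_c\to B_c$ along $\overline{M}_cQ\to B_c$. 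Now $i_c$ is an acyclic cofibration in $\sSet$, and $\overline{M}_cQ\to B_c$ is a fibration, being the pullback of $M_cQ\to M_cB$, which is a fibration by \autoref{thm:latchcof} (applicable to the truncated diagram by \autoref{rmk:latchcof}); so by right properness of $\sSet$, $j_c$ is an acyclic cofibration. I can then invoke the fibration extension property for simplicial sets --- the case of~\ref{item:u3p} for $\sSet$ itself, due to Voevodsky and written out in~\cite{klv:ssetmodel} --- to obtain a $\ka$-small Kan fibration $Q_c\fib\overline{M}_cQ$ with a chosen isomorphism $j_c^*Q_c\cong P_c$ over $\overline{M}_cP$. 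The composite $Q_c\fib\overline{M}_cQ\to B_c$ is the structure map, and since $j_c$ is a pullback of $i_c$ this isomorphism identifies $(i^*Q)_c = A_c\times_{B_c}Q_c$ with $P_c$ compatibly with the matching maps.

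Next I would produce the latching map $\lambda\colon L_cQ\to Q_c$ of $Q$ at $c$ and check it is compatible with that of $P$. The key observation is that $i^*Q\into Q$, as a map of diagrams on $\{d : d\prec c\}$, is itself a levelwise acyclic cofibration: its component at $d$ is the pullback of the acyclic cofibration $i_d$ along the fibration $Q_d\fib B_d$. Hence it is a Reedy acyclic cofibration, so $L_c(i^*Q)\to L_cQ$ is an acyclic cofibration of simplicial sets by \autoref{thm:latchcof} and \autoref{rmk:latchcof}. I would then lift in the square whose left leg is $L_c(i^*Q)\to L_cQ$, whose right leg is the fibration $Q_c\fib\overline{M}_cQ$, whose bottom is the canonical map $L_cQ\to\overline{M}_cQ$, and whose top is $L_c(i^*Q)\xto{L_c\phi}L_cP\to P_c\cong j_c^*Q_c\into Q_c$; commutativity of this square is a routine check using functoriality of $L_c$ and $M_c$ and naturality of the canonical latching-to-matching maps. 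A lift $\lambda$ exists (acyclic cofibration against fibration), and a short diagram chase --- using that $j_c^*Q_c\into Q_c$ is monic --- shows that $\lambda$ restricts along $i$, under $\phi$ and the chosen isomorphism, to the latching map of $P$ at $c$. This closes the induction; for $\prec$-minimal $c$ one has $L_c = \emptyset$ and $M_c = 1$, and the step is just fibration extension along $i_c$. The resulting $Q\to B$ is $\ka$-small because each $Q_c\fib\overline{M}_cQ$ is (the simplicial fibration extension property preserves $\ka$-smallness), which for $\ka > |\cC|$ is equivalent to $\ka$-smallness of the Reedy fibration.

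The only genuinely simplicial-set-specific ingredient is the fibration extension property of $\sSet$; everything else is formal, driven by elegance and \autoref{thm:latchcof}. The hard part is the bookkeeping that keeps $\phi$ strictly compatible --- not merely up to homotopy --- with both the matching and the latching structure all the way through the induction. This is exactly where more is needed than the bare hypotheses of \autoref{thm:wellordered}: the matching side is automatic (matching is a limit, so $i^*$ commutes with it), while the latching side goes through precisely because elegance makes the latching maps monomorphisms and \autoref{thm:latchcof} makes $L_c$ send levelwise acyclic cofibrations to acyclic cofibrations.
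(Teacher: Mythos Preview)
Your proof is correct and follows essentially the same Reedy-inductive strategy as the paper: both arguments show that $j_c\colon \overline{M}_cP\to\overline{M}_cQ$ is an acyclic cofibration (pullback of $i_c$ along a fibration, using \autoref{thm:latchcof}), that $L_cP\to L_cQ$ is an acyclic cofibration (elegance plus \autoref{thm:latchcof}), and then invoke an extension property of $\sSet$.

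The only difference is packaging of the inductive step. The paper works directly with the simplicial universe $U$: it forms the pushout $D$ of $L_cQ$ and $\overline{M}_cP$ over $L_cP$, shows $D\to\overline{M}_cQ$ is an acyclic cofibration, and extends a single classifying map $D\to U$ along it, so that $Q_c$ comes equipped with its latching map automatically. You instead black-box the universe as ``fibration extension in $\sSet$'' to get $Q_c\fib\overline{M}_cQ$ first, and then produce the latching map $L_cQ\to Q_c$ by a separate lifting against that fibration. Both organizations work; yours is slightly more modular, while the paper's avoids the extra diagram chase checking that the separately-constructed $\lambda$ restricts correctly to the latching map of $P$.
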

\begin{proof}
  Let $i\colon A\cof B$ be a Reedy (i.e.\ levelwise) acyclic cofibration, and let $p\colon P\fib A$ be a small Reedy fibration.
  The small fibration $Q\to B$ we want to define will be, in particular, a factorization of the composite $p i\colon P \to B$.
  By a standard argument for Reedy diagrams (e.g. as found in~\cite[5.2.5]{hovey:modelcats},~\cite[15.3.16]{hirschhorn:modelcats}, or~\cite[7.4]{rv:reedy}), to give such a factorization is equivalent to giving, by well-founded induction on $c\in \cC$, a factorization of the induced map
  \[ P_c \amalg_{L_c P} L_c Q \too B_c \times_{M_c B} M_c Q \]
  (with $L_c Q$ and $M_c Q$ being defined inductively as we go).
  Equivalently, we must give an object $Q_c$ and dashed arrows which complete the following diagram to be commutative:
  \begin{equation}
    \vcenter{\xymatrix{
        L_c P\ar[r]\ar[d] &
        L_c Q\ar@{-->}[d] \ar@(dr,ur)[dd]\\
        P_c \ar@{-->}[r]\ar[d] &
        Q_c \ar@{-->}[d]\\
        A_c \times_{M_c A} M_c P\ar[r] &
        B_c \times_{M_c B} M_c Q.
      }}\label{eq:r3prect}
  \end{equation}
  By assumption, the lower-left map $P_c \to A_c \times_{M_c A} M_c P$ in~\eqref{eq:r3prect} is a small fibration.
  Thus, it has a classifying map $A_c \times_{M_c A} M_c P \to U$, where $U$ is a universe for \ka-small Kan fibrations in \sSet.
  (Recall that this universe is fibrant, by~\cite[Theorem 2.2.1]{klv:ssetmodel}.)
  The section $L_c P \to P_c$ of this fibration corresponds to a dashed lifting:
  \[\xymatrix{ & & \Util \ar@{->>}[d]\\
    L_c P \ar[r] \ar@{-->}[urr] & A_c \times_{M_c A} M_c P \ar[r] & U.
  }\]
  Now by induction, for all $\al\colon c\to d$ in $\cC^-$, the map $P_d \to Q_d$ is a pullback of an acyclic cofibration along a fibration, hence also an acyclic cofibration.
  Thus, the map $P\to Q$ is (insofar as it is defined) a levelwise acyclic cofibration, hence (by elegance) a Reedy acyclic cofibration.
  By \autoref{thm:latchcof} (and \autoref{rmk:latchcof}), therefore, $L_c P \to L_c Q$ is an acyclic cofibration.
  Since \Util is fibrant, we can thus extend the above classifying map $L_c P \to \Util$ to $L_c Q$.

  Now I claim that the bottom horizontal map in~\eqref{eq:r3prect} is an acyclic cofibration.
  By induction, for all $\al\colon d\to c$ in $\cC^+$, we have a pullback square
  \[\vcenter{\xymatrix@-.5pc{
      P_d\ar[r]\ar[d] \pullbackcorner &
      Q_d\ar[d]\\
      A_d\ar[r] &
      B_d.
    }}\]
  Therefore, the right-hand square below is also a pullback (while the left-hand square is a pullback by definition):
  \[\vcenter{\xymatrix{
      A_c \times_{M_c A} M_c P \ar[r] \ar[d] \pullbackcorner &
      M_cP\ar[r]\ar[d] \pullbackcorner &
      M_cQ\ar[d]\\
      A_c \ar[r] &
      M_cA\ar[r] &
      M_cB.
    }}\]
  Thus, the outer rectangle above is also a pullback.
  Since this is also the outer rectangle in the next diagram, whose right-hand square is a pullback by definition, so is its left-hand square.
  \begin{equation}
  \vcenter{\xymatrix{
      A_c \times_{M_c A} M_c P \ar[r] \ar[d] \pullbackcorner &
      B_c \times_{M_c B} M_c Q\ar[r]\ar[d] \pullbackcorner &
      M_c Q\ar[d]\\
      A_c \ar[r] &
      B_c \ar[r] &
      M_cB.
    }}\label{eq:r3rect2}
  \end{equation}
  But $Q \to B$ is (insofar as it has been defined) a Reedy fibration; hence by \autoref{thm:latchcof}, $M_c Q \to M_c B$ is a fibration.
  Therefore, so is the middle vertical map in~\eqref{eq:r3rect2}.
  This means that the left-hand square in~\eqref{eq:r3rect2} exhibits the bottom horizontal map in~\eqref{eq:r3prect} as a pullback of the acyclic cofibration $A_c \to B_c$ along a fibration, so it is an acyclic cofibration.

  Let $D$ be the following pushout, with induced map as shown:
  \[\vcenter{\xymatrix{
      L_c P\ar[r]\ar[d] &
      L_c Q\ar[d] \ar[ddr] \\
      A_c \times_{M_c A} M_c P\ar[r] \ar[drr] &
      D \pushoutcorner \ar@{.>}[dr]\\
      && B_c \times_{M_c B} M_c Q.
    }}\]
  Since every morphism in $\cC^-$ is split epic in an elegant Reedy category, every morphism $L_c X \to M_c X$ is a monomorphism.
  It follows that the maps $L_c P \to A_c \times_{M_c A} M_c P$ and $L_c Q \to  B_c \times_{M_c B} M_c Q$ are also monomorphisms.
  We have already observed that $L_c P \to L_c Q$ and $A_c \times_{M_c A} M_c P\to B_c \times_{M_c B} M_c Q$ are monomorphisms (cofibrations), so the above pushout is a union of subobjects, and hence the induced dotted map is also a monomorphism.

  Moreover, we have also observed that $A_c \times_{M_c A} M_c P\to B_c \times_{M_c B} M_c Q$ is an acyclic cofibration, and so is $A_c \times_{M_c A} M_c P\to D$ since it is a pushout of such.
  Therefore, by the 2-out-of-3 property, the induced dotted map is also a weak equivalence, hence an acyclic cofibration.

  Now recall that we have a classifying map $A_c \times_{M_c A} M_c P \to U$ for $P_c$, and an extension to $L_c Q$ of its restriction to $L_c P$ (by way of \Util).
  Thus, we have an induced map $D\to U$, and since $U$ is fibrant we can extend this map to $B_c \times_{M_c B} M_c Q$.
  Let $Q_c \to B_c \times_{M_c B} M_c Q$ be the fibration classified by this map.
  Then we have all the dashed arrows making~\eqref{eq:r3prect} commutative and its lower square a pullback.
  By pasting this on top of the left-hand square in~\eqref{eq:r3rect2}, we see that $P_c$ is the pullback of $Q_c$ along $i_c$, as desired.
\end{proof}

Putting this together with \S\S\ref{sec:on-univalence}--\ref{sec:modeling-type-theory}, we have shown:

\begin{thm}
  For any elegant Reedy category \cC, the Reedy model category $\sSet^{\cC\op}$ supports a model of intensional type theory with dependent sums and products, identity types, and as many univalent universes as there are inaccessible cardinals greater than $|\cC|$.\qed
\end{thm}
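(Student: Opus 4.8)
The plan is to combine the general machinery of \S\S\ref{sec:on-univalence}--\ref{sec:modeling-type-theory} with the facts established in this section. First I would check that the Reedy model structure on $\sSet^{\cC\op}$ verifies the hypotheses of \autoref{thm:modeltt}: it is right proper, cofibrantly generated, and simplicial (recalled above from~\cite{hirschhorn:modelcats,rv:reedy}), and --- this is where elegance of $\cC$ is used --- its cofibrations are exactly the levelwise monomorphisms~\cite{br:reedy}, so the Reedy model structure coincides with the injective one.

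The one hypothesis that is not automatic is that the codomains of the generating acyclic cofibrations be representable. Regarding $\sSet^{\cC\op}$ as the ordinary presheaf category $\bSet^{\Delta\op\times\cC\op}$, its representables are precisely the objects $\Delta^n\otimes Y_c$, and the explicit generating set~\eqref{eq:reedy-gen} exhibits exactly these as the codomains in question. Together with the observation recorded just before \autoref{thm:reedy-3p} --- that for $\ka>|\cC|$ a Reedy fibration is $\ka$-small in the sense of \S\ref{sec:on-univalence} if and only if each map~\eqref{eq:rfibmap} is a $\ka$-small Kan fibration --- this lets me invoke \autoref{thm:modeltt} and \autoref{thm:modeltt2}: for every inaccessible $\ka>|\cC|$ there is a universe object classifying the $\ka$-small Reedy fibrations and satisfying~\ref{item:u2p}, these universes nest into a cumulative hierarchy of universe embeddings, and $\sSet^{\cC\op}$ models intensional type theory with $\Sigma$-types, $\Pi$-types, and identity types, the requisite coherence being supplied by~\cite{lw:localuniv}.

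It remains only to see that each of these universes is fibrant, and this is exactly the content secured by \autoref{thm:reedy-3p}: elegance of $\cC$ gives condition~\ref{item:u3p}, and, as explained in \S\ref{sec:on-univalence}, the conjunction of~\ref{item:u2p} and~\ref{item:u3p} forces $U$ to be fibrant --- given an acyclic cofibration $A\cof B$ and a map $A\to U$, one uses~\ref{item:u3p} to extend the classified fibration over $B$ and then~\ref{item:u2p} to extend the classifying map. Hence, by \autoref{thm:modeltt2}, these universes represent univalent universes in the internal type theory, which is the assertion of the theorem.

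As for the main obstacle: at this point there really is none, because every genuinely new difficulty peculiar to elegant Reedy categories has already been isolated and dispatched in \autoref{thm:reedy-3p}, which is the technical heart of the paper. The only point demanding a little care is the cardinal bookkeeping --- that taking $\ka$ inaccessible and larger than $|\cC|$ makes the $\ka$-small Reedy fibrations simultaneously closed under composition, dependent products, and the $\Delta^1$-cotensor path objects used to model identity types --- and this is exactly what the discussion of \S\ref{sec:modeling-type-theory}, together with the closure of $\ka$-small morphisms under limits of size $<\ka$, takes care of.
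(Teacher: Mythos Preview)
Your proposal is correct and follows essentially the same route as the paper: the paper's own proof is just the sentence ``Putting this together with \S\S\ref{sec:on-univalence}--\ref{sec:modeling-type-theory}, we have shown:'' followed by the theorem statement and a \qed, and your write-up is precisely a careful unpacking of that sentence, verifying the hypotheses of Theorems~\ref{thm:modeltt} and~\ref{thm:modeltt2} and then invoking \autoref{thm:reedy-3p} for fibrancy.
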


Since direct categories are elegant Reedy categories, and presheaves on a direct category are of course the same as covariant diagrams on its opposite (which is an inverse category), this generalizes (the restriction to $\sSet$ of) the corresponding theorem proven in~\cite{shulman:invdia}.

It also includes some new examples, such as the model categories $\sSet^{\Delta\op}$ of bisimplicial spaces and $\sSet^{\Theta_n\op}$ of $\Theta_n$-spaces.
These model categories are interesting, among other reasons, because they have localizations that present theories of higher categories~\cite{rezk:css,rezk:cpncats}.
The univalent universes we have constructed in these model categories have ``sub-universes'' corresponding to these localizations, which may be useful for applying type theory to the study of higher categories.

\bibliographystyle{alpha}
\bibliography{all}

\newcommand{\etalchar}[1]{$^{#1}$}
\begin{thebibliography}{KLV12b}

\bibitem[AK11]{ak:htmtt}
Peter Arndt and Krzysztof Kapulkin.
\newblock Homotopy-theoretic models of type theory.
\newblock In Luke Ong, editor, {\em Typed Lambda Calculi and Applications},
  volume 6690 of {\em Lecture Notes in Computer Science}, pages 45--60.
  Springer Berlin / Heidelberg, 2011.

\bibitem[AW09]{aw:htpy-idtype}
Steve Awodey and Michael~A. Warren.
\newblock Homotopy theoretic models of identity types.
\newblock {\em Math. Proc. Camb. Phil. Soc.}, 146(45):45--55, 2009.

\bibitem[Awo14]{awodey:natmodels}
Steve Awodey.
\newblock Natural models of homotopy type theory.
\newblock arXiv:1406.3219, 2014.

\bibitem[BR13]{br:reedy}
Julia~E. Bergner and Charles Rezk.
\newblock Reedy categories and the {$\varTheta$}-construction.
\newblock {\em Math. Z.}, 274(1-2):499--514, 2013.

\bibitem[CD11]{cd:lccc-tt}
Pierre Clairambault and Peter Dybjer.
\newblock The biequivalence of locally cartesian closed categories and
  {Martin-L\"{o}f} type theories.
\newblock In {\em Proceedings of the 10th international conference on Typed
  lambda calculi and applications}, TLCA'11, pages 91--106, Berlin, Heidelberg,
  2011. Springer-Verlag.

\bibitem[Cis02]{cisinski:topos}
Denis-Charles Cisinski.
\newblock Th\'eories homotopiques dans les topos.
\newblock {\em J. Pure Appl. Algebra}, 174:43--82, 2002.

\bibitem[Cis06]{cisinski:presheaves}
Denis-Charles Cisinski.
\newblock {\em Les pr\'efaisceaux comme mod\`eles type d'homotopie}, volume 308
  of {\em Ast\'erisque}.
\newblock Soc. Math. France, 2006.

\bibitem[Cis12]{cisinski:lccc-rpcmc}
Denis-Charles Cisinski.
\newblock Blog comment on post ``{T}he mysterious nature of right properness''.
\newblock
  \url{http://golem.ph.utexas.edu/category/2012/05/the_mysterious_nature_of_right.html\#c041306},
  May 2012.

\bibitem[Cis14]{cisinski:elegant}
Denis-Charles Cisinski.
\newblock Univalent universes for elegant models of homotopy types.
\newblock arXiv:1406.0058, 2014.

\bibitem[CLW93]{clw:ext-dist}
Aurelio Carboni, Stephen Lack, and R.F.C. Walters.
\newblock Introduction to extensive and distributive categories.
\newblock {\em J. Pure Appl. Algebra}, 84(2):145--158, 1993.

\bibitem[Fav13]{favonia:blakers-massey-agda}
Favonia.
\newblock {BlakersMassey.agda}.
\newblock
  \url{https://github.com/HoTT/HoTT-Agda/blob/1.0/Homotopy/BlakersMassey.agda},
  April 2013.

\bibitem[GG08]{gg:idtypewfs}
Nicola Gambino and Richard Garner.
\newblock The identity type weak factorisation system.
\newblock {\em Theor. Comput. Sci.}, 409:94--109, December 2008.

\bibitem[GK12]{gk:univlcc}
David Gepner and Joachim Kock.
\newblock Univalence in locally cartesian closed $\infty$-categories.
\newblock arXiv:1208.1749, 2012.

\bibitem[GL12]{gl:adhesive}
Richard Garner and Stephen Lack.
\newblock On the axioms for adhesive and quasiadhesive categories.
\newblock {\em Theory Appl. Categ.}, 27(3):27--46, 2012.

\bibitem[Hir03]{hirschhorn:modelcats}
Philip~S. Hirschhorn.
\newblock {\em Model Categories and their Localizations}, volume~99 of {\em
  Mathematical Surveys and Monographs}.
\newblock American Mathematical Society, 2003.

\bibitem[Hov99]{hovey:modelcats}
Mark Hovey.
\newblock {\em Model Categories}, volume~63 of {\em Mathematical Surveys and
  Monographs}.
\newblock American Mathematical Society, 1999.

\bibitem[HS98]{hs:gpd-typethy}
Martin Hofmann and Thomas Streicher.
\newblock The groupoid interpretation of type theory.
\newblock In {\em Twenty-five years of constructive type theory ({V}enice,
  1995)}, volume~36 of {\em Oxford Logic Guides}, pages 83--111. Oxford Univ.
  Press, New York, 1998.

\bibitem[KLV12a]{klv:ssetmodel}
Chris Kapulkin, Peter~LeFanu Lumsdaine, and Vladimir Voevodsky.
\newblock The simplicial model of univalent foundations.
\newblock arXiv:1211.2851, 2012.

\bibitem[KLV12b]{klv:univalence}
Chris Kapulkin, Peter~Le{F}anu Lumsdaine, and Vladimir Voevodsky.
\newblock Univalence in simplicial sets.
\newblock arXiv:1203.2553, 2012.

\bibitem[LS04]{ls:adhesive}
Stephen Lack and Pawe{\l} Soboci{\'n}ski.
\newblock Adhesive categories.
\newblock In {\em Foundations of software science and computation structures},
  volume 2987 of {\em Lecture Notes in Comput. Sci.}, pages 273--288. Springer,
  Berlin, 2004.

\bibitem[LS13]{ls:pi1s1}
Daniel~R. Licata and Michael Shulman.
\newblock Calculating the fundamental group of the circle in homotopy type
  theory.
\newblock In {\em LICS'13}, 2013.

\bibitem[Lur09]{lurie:higher-topoi}
Jacob Lurie.
\newblock {\em Higher topos theory}.
\newblock Number 170 in Annals of Mathematics Studies. Princeton University
  Press, 2009.

\bibitem[LW14]{lw:localuniv}
Peter~Le{F}anu Lumsdaine and Michael Warren.
\newblock The local universes model: an overlooked coherence construction for
  dependent type theories.
\newblock arXiv:1411.1736, 2014.

\bibitem[Rez01]{rezk:css}
Charles Rezk.
\newblock A model for the homotopy theory of homotopy theory.
\newblock {\em Trans. Amer. Math. Soc.}, 353(3):973--1007 (electronic), 2001.

\bibitem[Rez10]{rezk:cpncats}
Charles Rezk.
\newblock A {C}artesian presentation of weak {$n$}-categories.
\newblock {\em Geom. Topol.}, 14(1):521--571, 2010.

\bibitem[Rez14]{rezk:hott-blakersmassey}
Charles Rezk.
\newblock Proof of the {Blakers--Massey} theorem.
\newblock
  \url{http://www.math.uiuc.edu/~rezk/freudenthal-and-blakers-massey.pdf},
  2014.

\bibitem[RV14]{rv:reedy}
Emily Riehl and Dominic Verity.
\newblock The theory and practice of {Reedy} categories.
\newblock {\em Theory and Applications of Categories}, 29(9):256--301, 2014.
\newblock \url{http://www.tac.mta.ca/tac/volumes/29/9/29-09abs.html}.

\bibitem[S{\etalchar{+}}12]{nlab:exhaustive}
Michael Shulman et~al.
\newblock Exhaustive categories.
\newblock \url{http://ncatlab.org/nlab/show/exhaustive+category}, May 2012.

\bibitem[See84]{seely:lccc-tt}
R.~A.~G. Seely.
\newblock Locally {C}artesian closed categories and type theory.
\newblock {\em Math. Proc. Cambridge Philos. Soc.}, 95(1):33--48, 1984.

\bibitem[Shu14]{shulman:invdia}
Michael Shulman.
\newblock Univalence for inverse diagrams and homotopy canonicity.
\newblock To appear in \emph{Mathematical Structures in Computer Science};
  arXiv:1203.3253, 2014.

\bibitem[Str14]{streicher:ttuniv}
T.~Streicher.
\newblock A model of type theory in simplicial sets: A brief introduction to
  {Voevodsky's} homotopy type theory.
\newblock {\em Journal of Applied Logic}, 12(1):45 -- 49, 2014.
\newblock \url{http://www.mathematik.tu-darmstadt.de/~streicher/sstt.pdf}.

\bibitem[{Uni}13]{hottbook}
{Univalent Foundations Program}.
\newblock {\em Homotopy Type Theory: Univalent Foundations of Mathematics}.
\newblock \url{http://homotopytypetheory.org/book/}, 2013.

\bibitem[vG12]{gb:topsimpid}
Benno {van den Berg} and Richard Garner.
\newblock Topological and simplicial models of identity types.
\newblock {\em ACM Trans. Comput. Logic}, 13(1):3:1--3:44, 2012.

\bibitem[Voe11]{voevodsky:typesystems}
Vladimir Voevodsky.
\newblock Notes on type systems.
\newblock
  \url{http://www.math.ias.edu/~vladimir/Site3/Univalent_Foundations.html},
  2011.

\bibitem[War08]{warren:thesis}
Michael~A. Warren.
\newblock {\em Homotopy Theoretic Aspects of Constructive Type Theory}.
\newblock PhD thesis, Carnegie Mellon University, 2008.

\end{thebibliography}

\end{document}